\newtheorem{proposition}{Proposition}[section]
  \newtheorem{theorem}[proposition]{Theorem}
  \newtheorem{lemma}[proposition]{Lemma}
\theoremstyle{remark}
  \newtheorem{definition}[proposition]{Definition}
  \newtheorem{remark}[proposition]{Remark}
\newcommand{\cst}{\ifmmode\mathrm{C}^*\else{$\mathrm{C}^*$}\fi}
\newcommand{\st}{\;\vline\;}
\newcommand{\CC}{\mathbb{C}}
\newcommand{\RR}{\mathbb{R}}
\newcommand{\TT}{\mathbb{T}}
\newcommand{\tens}{\otimes}
\newcommand{\vtens}{\,\bar{\otimes}\,}
\newcommand{\id}{\mathrm{id}}
\newcommand{\comp}{\circ}
\renewcommand{\th}{\vartheta}
\newcommand{\rh}{\varrho}
\newcommand{\thL}{\vartheta^{\text{\tiny\rm{L}}}\;\!\!}
\newcommand{\hhthL}{\hh{\vartheta}^{\text{\tiny\rm{L}}}\;\!\!}
\newcommand{\I}{\mathds{1}}
\newcommand{\GG}{\mathbb{G}}
\newcommand{\HH}{\mathbb{H}}
\newcommand{\sB}{\mathsf{B}}
\newcommand{\sM}{\mathsf{M}}
\newcommand{\sA}{\mathsf{A}}
\newcommand{\sN}{\mathsf{N}}
\newcommand{\sK}{\mathsf{K}}
\newcommand{\sL}{\mathsf{L}}
\newcommand{\hh}[1]{\widehat{#1}}
\newcommand{\dd}[1]{\widetilde{#1}}
\newcommand{\op}{\text{\rm\tiny{op}}}
\newcommand{\flip}{\boldsymbol{\sigma}}
\DeclareMathOperator{\C}{C}
\DeclareMathOperator{\B}{B}
\DeclareMathOperator{\Mor}{Mor}
\DeclareMathOperator{\linW}{\overline{span}^{\:\!\text{\tiny\rm{w}}}\:\!\!}
\DeclareMathOperator{\Linf}{\mathnormal{L}^\infty\;\!\!}
\DeclareMathOperator{\Ltwo}{\mathnormal{L}^2\;\!\!}
\numberwithin{equation}{section}
\author{Pawe{\l} Kasprzak}
\address{Department of Mathematical Methods in Physics, Faculty of Physics, University of Warsaw, Poland}
\email{pawel.kasprzak@fuw.edu.pl}
\author{Piotr M.~So{\l}tan} 
\address{Department of Mathematical Methods in Physics, Faculty of Physics, University of Warsaw, Poland}
\email{piotr.soltan@fuw.edu.pl}
\thanks{Supported by National Science Centre (NCN) grant no.~2015/17/B/ST1/00085}
\title[Quantum groups with projection and extensions]{Quantum groups with projection and extensions of locally compact quantum groups}
\subjclass[2010]{Primary: 46L89, 46L85, Secondary: 47L65, 58B32}
\keywords{Quantum group with projection, extension of locally compact quantum groups, von Neumann algebra}
\begin{document}

\begin{abstract}
The main result of the paper is the characterization of those locally compact quantum groups with projection, i.e.~non-commutative analogs of semidirect products, which are extensions as defined by L.~Vainerman and S.~Vaes. It turns out that quantum groups with projection are usually not extensions. We discuss several examples including the quantum $\mathrm{U}_q(2)$. The major tool used to obtain these results is the co-duality for coideals in algebras of functions on locally compact quantum groups and the concept of a normal coideal.
\end{abstract}

\maketitle

\section{Introduction}\label{intro}

This is our second paper devoted to studying analogs of semidirect products in the theory of locally compact quantum groups. In the previous paper \cite{proj}, motivated by the pioneering work of \cite{RoyPhd} (with roots in \cite{PodSLW1,PodSLW2}) we introduced the notion of a locally compact quantum group with projection (onto another locally compact quantum group) on von Neumann algebra level, cf.~Section \ref{projSect}. Classically this structure corresponds to that of a semidirect product of locally compact groups. It turns out that in the non-commutative (or \emph{quantum}) setting the object which classically corresponds to the normal subgroup in the semidirect product decomposition of a group might fail to be a quantum group. The relevant structure is that of a \emph{braided quantum group} (cf.~\cite{WorBraid,RoyPhd,kmrw,proj}). The question we want to address in this paper concerns the relationship of the notion of a quantum group with projection with extensions of locally compact quantum groups defined in \cite{VaesVainerman}. The precise formulation of this question is given in Section \ref{ExtSect}, where in Theorem \ref{main} we provide a complete characterization of quantum groups with projection which are extensions. We provide examples which show that typically a quantum group with projection is not an extension.

The key tool we use in our analysis is the co-duality for coideals in algebras of functions on locally compact quantum groups (introduced in \cite{KaspSol}) and  the notions of a normal and strongly normal coideals. The rough picture of the situation is that even in cases where the ``normal subgroup'' required by the definition of an extension is missing in a locally compact quantum group with projection, the coideal which in case of an extension would correspond to the normal subgroup via co-duality still exists and has a property which we call \emph{normality} (cf.~Section \ref{normalSect}).

The paper is organized as follows: in Section \ref{Prel} we collect information concerning the basic set-up of the theory of locally compact quantum groups and recall the standard notation and terminology. Section \ref{coduality} contains an exposition of results about co-duality for coideals and several simple results concerning invariance of coideals under the unitary antipode and the scaling group and discussion of coideals related to closed quantum subgroups. Then, in Section \ref{normalSect}, the notions of  normal and strongly normal coideals are introduced and discussed. Section \ref{projSect} recalls the results of \cite{proj} and Section \ref{ExtSect} poses and answers the main question of the paper. Finally, in Section \ref{examplesSect}, a wealth of examples of locally compact quantum groups with projection are provided and the question whether they are extensions is settled. 

\section{Preliminaries}\label{Prel}

\subsection{Locally compact quantum groups}\label{lcqgs}

The paper deals with theory of locally compact quantum groups developed primarily in the seminal paper \cite{kv}. We will use the notation already adopted by several authors and write symbols like $\GG$ and $\HH$ to denote locally compact quantum groups and use $\Linf(\GG)$ and $\C_0(\GG)$ for the von Neumann algebra and \cst-algebra associated with $\GG$ (see e.g.~\cite{DKSS}). We refer to these algebras as algebras of functions on $\GG$. Both these algebras are equipped with comultiplications (we have $\Linf(\GG)=\C_0(\GG)''$ and the comultiplication on $\Linf(\GG)$ is an extension of that on $\C_0(\GG)$), and we will denote them both by the same symbol $\Delta_\GG$. For the theory of \cst-algebras and von Neumann algebras and its use in the theory of quantum groups we refer the reader e.g.~to \cite{kv,mnw,mu}. The symbol ``$\vtens$'' will denote the tensor product of von Neumann algebras while ``$\tens$'' will denote either the spatial tensor product of \cst-algebras, tensor product of Hilbert spaces or an algebraic tensor product of vector spaces depending on the context.

A locally compact quantum group $\GG$ carries a lot of structure. We will be using the Hilbert space $\Ltwo(\GG)$ defined to be the G.N.S.~Hilbert space associated to the right Haar measure on $\GG$. The corresponding modular conjugation will be denoted by $J$. The algebras $\C_0(\GG)$ and $\Linf(\GG)$ are naturally represented on $\Ltwo(\GG)$ and, moreover, the Hilbert space $\Ltwo(\hh{\GG})$ defined by the right Haar measure of the \emph{dual} locally compact quantum group (see \cite[Section 8]{kv}) is canonically isomorphic to $\Ltwo(\GG)$. In particular the von Neumann algebra $\Linf(\hh{\GG})$ can also be considered as algebra of operators on $\Ltwo(\GG)$ and the modular conjugation $\hh{J}$  of the right Haar measure of $\hh{\GG}$ is an antiunitary operator on $\Ltwo(\GG)$. 

The modular conjugations $J$ and $\hh{J}$ implement the \emph{unitary antipodes} $\hh{R}$ and $R$ of $\hh{\GG}$ and $\GG$ respectively in the following way:
\begin{equation}\label{RJ}
\begin{aligned}
R(x)&=\hh{J}x^*\hh{J},&\quad{x}\in\Linf(\GG),\\
\hh{R}(y)&=Jy^*J,&\quad{y}\in\Linf(\hh{\GG})
\end{aligned}
\end{equation}
(see \cite[Section 5]{kv}).

Another object which will play an important role in our paper is the Kac-Takesaki operator (or the \emph{right regular representation}) of $\GG$ which is a unitary $W^\GG\in\Linf(\hh{\GG})\vtens\Linf(\GG)\subset\B\bigl(\Ltwo(\GG)\tens\Ltwo(\GG)\bigr)$ (\cite{kv,mnw}) and $\Delta_\GG$ is implemented by $W^\GG$, i.e.~we have
\[
\begin{aligned}
\Delta_\GG(x)&=W^\GG(x\tens\I){W^\GG}^*,&\quad{x}\in\Linf(\GG).
\end{aligned}
\]
The Kac-Takesaki operator $W^{\hh{\GG}}$ corresponding to $\hh{\GG}$ turns out to be equal to $\flip({W^\GG}^*)$, where $\flip$ denotes, here and in what follows, the flip map.

The scaling groups (\cite{kv,mnw}) of $\GG$ and $\hh{\GG}$ will be denoted by $(\tau_t)_{t\in\RR}$ and $(\hh{\tau}_t)_{t\in\RR}$ respectively (or simply $\tau$ and $\hh{\tau}$ for short).

Given a locally compact quantum group $\GG$, the \emph{opposite} and the \emph{commutant} quantum groups $\GG^\op$ and $\GG'$ of $\GG$ can be defined. This was done in \cite[Section 4]{KVvN}, where a thorough discussion of these objects was also included. We will be using this material extensively. It is important to notice that the unitary antipodes of $\GG$ and $\GG^\op$ coincide (which makes sense because the algebras $\Linf(\GG)$ and $\Linf(\GG^\op)$ are equal). By \cite[Proposition 4.2]{KVvN} we have
\[
\hh{\GG^\op}=\hh{\GG}',\quad\hh{\GG'}=\hh{\GG}^\op\quad\text{and}\quad(\GG')^\op=(\GG^\op)'.
\]

Suppose $\GG$ and $\HH$ are locally compact quantum groups and we have an injective normal unital $*$-homomorphism $\gamma\colon\Linf(\HH)\to\Linf(\GG)$ such that $(\gamma\tens\gamma)\comp\Delta_\HH=\Delta_\GG\comp\gamma$. Then $\gamma$ also intertwines the unitary antipodes and scaling groups of $\GG$ and $\HH$ (\cite[Proposition 5.45]{kv}). Also, \cite[Proposition 10.5]{BV} says that if $\sM\subset\Linf(\GG)$ is a von Neumann subalgebra such that $\Delta_\GG(\sM)\subset\sM\vtens\sM$ and $\sM$ is invariant for $R$ and $\tau$ then there exists a locally compact quantum group $\HH$ such that $\sM$ is isomorphic to $\Linf(\HH)$ with a normal $*$-homomorphism intertwining the respective comultiplications.

\subsection{Closed quantum subgroups}\label{cqs}

Let $\GG$ and $\HH$ be locally compact quantum groups. We say that $\HH$ is a \emph{closed quantum subgroup} of $\GG$ in the sense of Vaes, if there exists an injective normal unital $*$-homomorphism $\gamma\colon\Linf(\hh{\HH})\to\Linf(\hh{\GG})$ such that $(\gamma\tens\gamma)\comp\Delta_{\hh{\HH}}=\Delta_{\hh{\GG}}\comp\gamma$. We will use exclusively this notion of a closed quantum subgroup often abbreviating the full name to ``$\HH$ is a subgroup of $\GG$''. A thorough study of closed quantum subgroups is contained in \cite{DKSS}. The results of this paper show that associated to a closed quantum subgroup $\HH$ of $\GG$ there are two actions
\[
\th\colon\Linf(\GG)\longrightarrow\Linf(\GG)\vtens\Linf(\HH)\quad\text{and}\quad\thL\colon\Linf(\GG)\longrightarrow\Linf(\HH)\vtens\Linf(\GG)
\]
which are a \emph{right} and \emph{left quantum group homomorphism} on von Neumann algebra level (\cite{MRW}), i.e.
\[
\begin{split}
(\Delta_\GG\tens\id)\comp\th&=(\id\tens\th)\comp\Delta_\GG,\\
(\id\tens\Delta_\HH)\comp\th&=(\th\tens\id)\comp\th,\\
(\id\tens\Delta_\GG)\comp\thL&=(\thL\tens\id)\comp\Delta_\GG,\\
(\Delta_\HH\tens\id)\comp\thL&=(\id\tens\thL)\comp\thL
\end{split}
\]
(cf.~\cite[Proposition 3.1]{VaesVainerman}).

There are various formulas describing the relationship between $\th$ and $\thL$ (see e.g.~\cite[Lemma 5.7]{MRW}), but the two most important ones for our uses are
\begin{subequations}
\begin{align}
(\id\tens\thL)\comp\Delta_\GG&=(\th\tens\id)\comp\Delta_\GG,\label{DelTh}\\
\hhthL&=\flip\comp(\hh{R}\tens\hh{R})\comp\hh{\th}\comp\hh{R}\label{thetaL}
\end{align}
\end{subequations}
(\cite[Theorem 5.5]{MRW}).

\subsection{Homomorphisms and bicharacters}

We will freely use the (already mentioned) notions of left and right quantum group homomorphisms and their relation to bicharacters as described in \cite{MRW}, cf.~also \cite[Section 2.4]{proj}. 

\section{Co-duality for coideals}\label{coduality}

Let $\GG$ be a locally compact quantum group. A von Neumann subalgebra $\sL\subset\Linf(\GG)$ is a \emph{left coideal} if $\Delta_\GG(\sL)\subset\Linf(\GG)\vtens\sL$. In \cite[Definition 3.6]{KaspSol} for each left coideal $\sL\subset\Linf(\GG)$ a left coideal $\dd{\sL}\subset\Linf(\hh{\GG})$ was defined by
\[
\dd{\sL}=\sL'\cap\Linf(\hh{\GG}).
\]
We call the coideal $\dd{\sL}$ the \emph{co-dual} of $\sL$. By \cite[Theorem 3.9]{KaspSol} we have $\dd{\dd{\sL}}=\sL$ for any left coideal $\sL\subset\Linf(\GG)$. A very similar duality for coideals was developed in \cite{tomatsu} for the case of compact quantum groups.

In the next proposition we use the fact that the unitary antipodes of $\GG$ and $\GG^\op$ coincide (see Section \ref{lcqgs}).

\begin{proposition}\label{JLJ}
Let $\sL\subset\Linf(\GG)$ be a left coideal. Define $\sK=J\sL{J}$. Then $\sK\subset\Linf(\GG')$ and $\sK$ is a left coideal in $\Linf(\GG')$. Moreover the co-dual $\dd{\sK}\subset\Linf(\hh{\GG'})=\Linf(\hh{\GG}^\op)$ satisfies
\[
\dd{\sK}=\hh{R}(\dd{\sL}).
\]
\end{proposition}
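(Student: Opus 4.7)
My plan is to treat the three claims in sequence, each of them reducing to routine manipulations with the modular conjugation $J$.

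For the inclusion $\sK\subset\Linf(\GG')$ and the von Neumann subalgebra structure, I would recall from \cite[Section 4]{KVvN} that $\Linf(\GG')$ is by definition the commutant of $\Linf(\GG)$ in $\B(\Ltwo(\GG))$, which by Tomita--Takesaki theory equals $J\Linf(\GG)J$; hence $\sK=J\sL J\subset\Linf(\GG')$. Closure of $\sK$ under the $*$-algebra operations and in the weak operator topology would then follow immediately from the identities $JxJ\cdot JyJ=J(xy)J$, $(JxJ)^*=Jx^*J$, $\lambda JxJ=J(\bar\lambda x)J$ and the continuity of the map $T\mapsto JTJ$.

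For the coideal property I would invoke the explicit formula for the comultiplication of the commutant quantum group from \cite[Section 4]{KVvN}, which in our conventions reads $\Delta_{\GG'}(JxJ)=(J\tens J)\Delta_\GG(x)(J\tens J)$ for $x\in\Linf(\GG)$. For $x\in\sL$ the second tensor factor of $\Delta_\GG(x)$ lies in $\sL$, so conjugating both factors by $J$ places $\Delta_{\GG'}(JxJ)$ into $J\Linf(\GG)J\vtens J\sL J=\Linf(\GG')\vtens\sK$, as required.

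For the co-dual I would unpack $\dd{\sK}=\sK'\cap\Linf(\hh{\GG'})$. First, using $J^2=\I$, one writes $\sK'=(J\sL J)'=J\sL'J$; next, the identity $\hh{\GG'}=\hh{\GG}^\op$ (\cite[Proposition 4.2]{KVvN}) gives $\Linf(\hh{\GG'})=\Linf(\hh{\GG})$. Since $\hh{R}(y)=Jy^*J$ is an antiautomorphism of $\Linf(\hh{\GG})$ by \eqref{RJ}, conjugation by $J$ preserves $\Linf(\hh{\GG})$, so the intersection can be pulled through: $\dd{\sK}=J\sL'J\cap\Linf(\hh{\GG})=J\bigl(\sL'\cap\Linf(\hh{\GG})\bigr)J=J\dd{\sL}J$. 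Finally, since $\dd{\sL}$ is $*$-closed, $J\dd{\sL}J=\{Jy^*J:y\in\dd{\sL}\}=\hh{R}(\dd{\sL})$, which finishes the proof. The only non-trivial input would be the formula for $\Delta_{\GG'}$ imported from \cite{KVvN}; once that is granted, the remaining arguments are a direct unwinding of definitions, and I would not foresee a substantive obstacle.
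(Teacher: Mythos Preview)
Your proposal is correct and follows essentially the same route as the paper: the coideal property via the formula $\Delta_{\GG'}(JxJ)=(J\tens J)\Delta_\GG(x)(J\tens J)$, and the co-dual computation by writing $\sK'=J\sL'J$, using that $\Linf(\hh{\GG'})=\Linf(\hh{\GG}^\op)=\Linf(\hh{\GG})$ is preserved under conjugation by $J$ (since $\hh{R}$ is so implemented), and pulling $J$ through the intersection. Your write-up is in fact slightly more careful than the paper's, which tacitly identifies $\Linf(\hh{\GG}^\op)$ with $\Linf(\hh{\GG})$ and contains a small typo ($\sL$ in place of $\sL'$) in the penultimate displayed line.
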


\begin{proof}
Take $x\in\sL$ and let $x'=JxJ$. Since $\sL$ is a left coideal, we have
\[
\begin{split}
\Delta_{\GG'}(x')&=(J\tens{J})\Delta_\GG(Jx'J)(J\tens{J})\\
&=(J\tens{J})\Delta_\GG(x)(J\tens{J})\in\Linf(\GG')\vtens\sK.
\end{split}
\]
Furthermore
\[
\begin{split}
\dd{\sK}&=\sK'\cap\Linf(\hh{\GG}^\op)\\
&=J\sL'J\cap\Linf(\hh{\GG}^\op)\\
&=J\bigl(\sL'\cap\Linf(\hh{\GG}^\op)\bigr)J\\
&=\hh{R}(\dd{\sL})
\end{split}
\]
because $\hh{R}$ is implemented by $J$ (Equation \eqref{RJ}).
\end{proof}

Let us note that Proposition \ref{JLJ} in particular says that $\dd{\sL}$ is $\hh{R}$-invariant if and only if
\begin{equation}\label{KL}
\dd{\sK}=\dd{\sL}.
\end{equation}
This however does not mean that $\sK=\sL$ because the co-dualities on both sides of \eqref{KL} are different: $\dd{\sK}$ is the co-dual of a coideal in $\Linf(\GG')$, while $\dd{\sL}$ is a co-dual of a coideal in $\Linf(\GG)$.

The next result deals with invariance of a coideal under the scaling group. We will say that a von Neumann subalgebra $\sN\subset\Linf(\GG)$ is \emph{$\tau$-invariant} if $\tau_t(\sN)=\sN$ for all $t\in\RR$.

\begin{proposition}\label{tauinv}
Let $\sL\subset\Linf(\GG)$ be a left coideal. Then $\sL$ is $\tau$-invariant if and only if $\dd{\sL}$ is $\hh{\tau}$-invariant.
\end{proposition}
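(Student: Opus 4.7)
The plan is to exploit the standard fact from the Kustermans--Vaes theory that both scaling groups are implemented by a common one-parameter group of unitaries on $\Ltwo(\GG)$: there is a positive self-adjoint operator $P$ on $\Ltwo(\GG)$ such that
\[
\tau_t(x)=P^{it}xP^{-it},\quad x\in\Linf(\GG),\qquad\hh{\tau}_t(y)=P^{it}yP^{-it},\quad y\in\Linf(\hh{\GG}),
\]
for every $t\in\RR$ (cf.~\cite{kv}). Once this simultaneous implementation is in hand, the rest is a short manipulation of commutants.

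Assuming $\sL$ is $\tau$-invariant, i.e.~$P^{it}\sL P^{-it}=\sL$ for all $t$, I would first take commutants in $\B(\Ltwo(\GG))$ to obtain $P^{it}\sL'P^{-it}=\sL'$. Since conjugation by $P^{it}$ also preserves $\Linf(\hh{\GG})$ (this is exactly the statement that $\hh{\tau}_t$ is an automorphism of $\Linf(\hh{\GG})$), intersecting the two invariances gives
\[
P^{it}\bigl(\sL'\cap\Linf(\hh{\GG})\bigr)P^{-it}=\sL'\cap\Linf(\hh{\GG})=\dd{\sL},
\]
which is precisely $\hh{\tau}_t(\dd{\sL})=\dd{\sL}$. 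Thus $\tau$-invariance of $\sL$ implies $\hh{\tau}$-invariance of $\dd{\sL}$.

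For the reverse implication I would run the same argument with $\GG$ and $\hh{\GG}$ interchanged: viewing $\dd{\sL}\subset\Linf(\hh{\GG})$ as a left coideal and using the same common implementation, $\hh{\tau}$-invariance of $\dd{\sL}$ yields $\tau$-invariance of its co-dual, which equals $\sL$ by the biduality $\dd{\dd{\sL}}=\sL$ recalled before Proposition~\ref{JLJ}.

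The only potential obstacle is the input that $(P^{it})_{t\in\RR}$ implements both $\tau$ and $\hh{\tau}$ simultaneously; this is standard in the von Neumann algebraic framework of \cite{kv} and needs only to be quoted. No analytic continuation, bicharacter computation, or use of the formulas \eqref{DelTh}--\eqref{thetaL} is required—the argument is a purely algebraic commutant manipulation combined with biduality.
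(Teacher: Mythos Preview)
Your proposal is correct and follows essentially the same approach as the paper: both proofs rest on the single input that the scaling groups $\tau$ and $\hh{\tau}$ are implemented by a common one-parameter unitary group on $\Ltwo(\GG)$ (the paper writes it as $(Q^{2it})_{t\in\RR}$), and then deduce invariance of the co-dual by a commutation argument. The only cosmetic difference is that you phrase it as ``take commutants and intersect'' while the paper carries out the commutation element by element; for the converse you invoke biduality where the paper simply says ``analogously'', but these amount to the same thing.
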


\begin{proof}
This is a very simple computation. The scaling groups of $\GG$ and $\hh{\GG}$ are implemented by the same strongly continuous one parameter group of unitaries $(Q^{2it})_{t\in\RR}$ on $\Ltwo(\GG)$ (\cite[Theorem 1.5(5)]{mu}, \cite[Proposition 6.10]{kv}, cf.~also \cite{remmu} and \cite[Proof of Proposition 39(4)]{modmu}). Assume that $\sL$ is $\tau$-invariant. Then for $x\in\sL$, $y\in\dd{\sL}$ and $t\in\RR$ we have
\[
\tau_t(x)y=Q^{2it}xQ^{-2it}y=yQ^{2it}xQ^{-2it},
\]
so  $Q^{-2it}yQ^{2it}x=xQ^{-2it}yQ^{2it}$. Since this holds for any $t\in\RR$ and all $x\in\sL$, $y\in\dd{\sL}$, we conclude that $\dd{\sL}$ is $\hh{\tau}$-invariant. The converse implication is proved analogously.
\end{proof}

Let us now describe a class of coideals which will play a distinguished role in this paper. Let $\GG$ and $\HH$ be locally compact quantum groups and assume that $\HH$ is  closed quantum subgroup of $\GG$ via $\gamma\colon\Linf(\hh{\HH})\hookrightarrow\Linf(\hh{\GG})$. As mentioned in Section \ref{cqs} there are a left and a right quantum group homomorphisms $\th\colon\Linf(\GG)\to\Linf(\GG)\vtens\Linf(\HH)$ and $\thL\colon\Linf(\GG)\to\Linf(\HH)\vtens\Linf(\GG)$ associated with the pair $\HH\subset\GG$. Furthermore the image $\sL$ of $\gamma$ is a left (and also right) coideal in $\Linf(\hh{\GG})$ (in fact $\Delta_{\hh{\GG}}(\sL)\subset\sL\vtens\sL$). We have

\begin{proposition}\label{homog}
With the assumptions above we have
\begin{equation}\label{invth}
\dd{\sL}=\bigl\{x\in\Linf(\GG)\st\th(x)=x\tens\I\bigr\}\quad\text{and}\quad
R\bigl(\dd{\sL}\bigr)=\bigl\{x\in\Linf(\GG)\st\thL(x)=\I\tens{x}\bigr\}.
\end{equation}
\end{proposition}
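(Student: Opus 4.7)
The plan is to implement $\th$ by a bicharacter and read off its fixed points, then derive the $\thL$-statement by applying the unitary antipode. The two key ingredients are the implementation of $\th$ in terms of the bicharacter $V=(\gamma\tens\id)(W^\HH)\in\Linf(\hh{\GG})\vtens\Linf(\HH)$ associated with the inclusion $\HH\subset\GG$, and the analogue of \eqref{thetaL} for the pair $\HH\subset\GG$ itself.

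For the first equality in \eqref{invth} I would use the implementation formula $\th(x)=V(x\tens\I)V^*$ from \cite{MRW} (which is consistent with $\Delta_\GG(x)=W^\GG(x\tens\I){W^\GG}^*$ in the special case $\HH=\GG$, $V=W^\GG$). This reduces $\th(x)=x\tens\I$ to the statement that $V$ commutes with $x\tens\I$ in $\B(\Ltwo(\GG))\vtens\Linf(\HH)$. Slicing the second leg gives
\[
(\id\tens\omega)(V)=\gamma\bigl((\id\tens\omega)(W^\HH)\bigr),\qquad\omega\in\Linf(\HH)_*,
\]
and because the right-leg slices of $W^\HH$ span a weak-$*$ dense subspace of $\Linf(\hh{\HH})$ and $\gamma$ is normal, these elements are weak-$*$ dense in $\sL=\gamma(\Linf(\hh{\HH}))$. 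Hence commutation of $V$ with $x\tens\I$ is equivalent to $x$ commuting with every element of $\sL$, i.e.~$x\in\sL'\cap\Linf(\GG)=\dd{\sL}$.

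For the second equality I would invoke the identity
\[
\thL=\flip\comp(R\tens{R})\comp\th\comp{R},
\]
which is the instance of \eqref{thetaL} applied to the pair $\HH\subset\GG$ rather than to its dual. Together with $R^2=\id$ and $R(\I)=\I$ this yields $\thL(R(x))=\flip(R\tens{R})(\th(x))$, so $\thL(R(x))=\I\tens{R}(x)$ if and only if $\th(x)=x\tens\I$. In particular $R$ restricts to a bijection between the $\th$-fixed subspace (just identified with $\dd{\sL}$) and $\bigl\{x\in\Linf(\GG)\st\thL(x)=\I\tens{x}\bigr\}$, giving the desired equality $R(\dd{\sL})=\bigl\{x\in\Linf(\GG)\st\thL(x)=\I\tens{x}\bigr\}$. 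The most delicate point in execution is securing the implementation formula for $\th$ and the antipode identity in the conventions of \cite{MRW,VaesVainerman}; once these are in place, the rest is a clean slice-and-antipode computation.
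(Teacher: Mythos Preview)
Your proposal is correct and follows essentially the same route as the paper: implement $\th$ by the bicharacter $V=(\gamma\tens\id)W^\HH$, observe that the left-leg slices of $V$ generate $\sL$ so that $\th(x)=x\tens\I$ is equivalent to $x\in\sL'\cap\Linf(\GG)=\dd{\sL}$, and then deduce the second equality from the antipode relation $\thL=\flip\comp(R\tens R)\comp\th\comp R$ (the unhatted instance of \eqref{thetaL}). Your write-up simply spells out the slice argument and the antipode computation in more detail than the paper's terse ``direct consequence of \eqref{thetaL}''.
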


\begin{proof}
The map $\th$ is implemented by $V=(\gamma\tens\id)W^\HH$ in the following sense:
\[
\begin{aligned}
\th(x)&=V(x\tens\I)V^*,&\quad{x}\in\Linf(\GG)
\end{aligned}
\]
(\cite[Theorem 5.3]{MRW}, \cite[Theorem 3.3]{DKSS}). Since slices of $V$ generate $\sL=\gamma\bigl(\Linf(\hh{\HH})\bigr)$, for $x\in\Linf(\GG)$, to commute with $\sL$ is equivalent to $\th(x)=x\tens\I$. This proves the first formula of \eqref{invth}. The second one is a direct consequence of \eqref{thetaL}. 
\end{proof}

Let $\GG,\HH$ and $\sL$ be as in Proposition \ref{homog}. The coideal $\dd{\sL}\subset\Linf(\GG)$ is called in \cite[Definition 4.1]{impr} \emph{measured quantum homogeneous space}. We will sometimes use the notation $\dd{\sL}=\Linf(\GG/\HH)$.

\section{Normal coideals}\label{normalSect}

\begin{definition}\label{normalDef}
Let $\GG$ be a locally compact quantum group and let $\sL\subset\Linf(\GG)$ be a left coideal. We say that $\sL$ is \emph{normal} if
\[
{W^{\hh{\GG}}}^*(\sL\tens\I)W^{\hh{\GG}}\subset\sL\vtens\Linf(\hh{\GG}).
\]
\end{definition}

\begin{remark}
Equivalently a left coideal $\sL\subset\Linf(\GG)$ is normal if $W^\GG(\I\tens\sL){W^\GG}^*\subset\Linf(\hh{\GG})\vtens\sL$. The definition is a direct extension of the notion of a normal quantum subgroup from \cite{VaesVainerman2}. A similar notion appears also in \cite{tomatsu}, where an analog of Proposition \ref{normalProp} was proved.
\end{remark}

\begin{proposition}\label{normalProp}
Let $\GG$ be a locally compact quantum group and $\sL\subset\Linf(\GG)$ a left coideal. Then $\Delta_\GG(\sL)\subset\sL\vtens\sL$ if and only if $\dd{\sL}$ is normal.
\end{proposition}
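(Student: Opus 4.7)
The plan is to use the co-duality identity $\dd{\dd\sL}=\sL$ from \cite[Theorem 3.9]{KaspSol} to translate the inclusion $\Delta_\GG(\sL)\subset\sL\vtens\sL$ into a commutation condition that can be compared directly with Definition \ref{normalDef} applied to $\dd\sL$.

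First, since $\sL$ is already a left coideal, $\Delta_\GG(\sL)\subset\Linf(\GG)\vtens\sL$ is automatic, so the inclusion $\Delta_\GG(\sL)\subset\sL\vtens\sL$ is equivalent to $\Delta_\GG(\sL)\subset\sL\vtens\Linf(\GG)$. The tensor product commutation theorem shows that the relative commutant of $\dd\sL\tens\I$ inside $\Linf(\GG)\vtens\Linf(\GG)$ equals $\bigl((\dd\sL)'\cap\Linf(\GG)\bigr)\vtens\Linf(\GG)$, and $(\dd\sL)'\cap\Linf(\GG)=\dd{\dd\sL}=\sL$. Hence $\Delta_\GG(\sL)\subset\sL\vtens\Linf(\GG)$ holds if and only if $\Delta_\GG(x)$ commutes with $y\tens\I$ for every $x\in\sL$ and $y\in\dd\sL$.

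Next I would rewrite this commutation using $\Delta_\GG(x)=W^\GG(x\tens\I){W^\GG}^*$: conjugating by ${W^\GG}^*$ shows that $\Delta_\GG(x)$ commutes with $y\tens\I$ exactly when $x\tens\I$ commutes with ${W^\GG}^*(y\tens\I)W^\GG$. Because $W^\GG\in\Linf(\hh{\GG})\vtens\Linf(\GG)$, for any $y\in\dd\sL$ the operator ${W^\GG}^*(y\tens\I)W^\GG$ already belongs to $\Linf(\hh{\GG})\vtens\Linf(\GG)$; the additional requirement that it commute with $\sL\tens\I$ for all $x\in\sL$ places it in
\[
\bigl(\sL'\vtens\B(\Ltwo(\GG))\bigr)\cap\bigl(\Linf(\hh{\GG})\vtens\Linf(\GG)\bigr)=\bigl(\sL'\cap\Linf(\hh{\GG})\bigr)\vtens\Linf(\GG)=\dd\sL\vtens\Linf(\GG),
\]
again by tensor product commutation.

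Assembling these equivalences, $\Delta_\GG(\sL)\subset\sL\vtens\sL$ becomes
\[
{W^\GG}^*(\dd\sL\tens\I)W^\GG\subset\dd\sL\vtens\Linf(\GG),
\]
and since $W^{\hh{\hh{\GG}}}=W^\GG$ this is exactly Definition \ref{normalDef} applied to the coideal $\dd\sL\subset\Linf(\hh{\GG})$. There is no real obstacle: one only needs to keep the two applications of tensor product commutation straight and use $\dd{\dd\sL}=\sL$ twice, after which the equivalence becomes essentially tautological.
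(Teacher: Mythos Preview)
Your argument is correct and is essentially the same as the paper's proof: both reduce the question to whether ${W^\GG}^*(y\tens\I)W^\GG$ commutes with $\sL\tens\I$ for $y\in\dd\sL$, and both use $\dd{\dd\sL}=\sL$ (the paper implicitly) to identify the relevant relative commutants. The only difference is packaging---you invoke the tensor-product commutation theorem explicitly, while the paper writes out the commutator computations by hand---so the two proofs are interchangeable.
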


\begin{proof}
Assume $\dd{\sL}$ is normal and take $y\in\dd{\sL}$. For $u={W^{\GG}}^*(y\tens\I)W^{\GG}$ and $x\in\sL$ we compute using the fact that $u\in\dd{\sL}\vtens\Linf(\GG)$:
\[
\begin{split}
(y\tens\I)\Delta_\GG(x)&=(y\tens\I)W^\GG(x\tens\I){W^\GG}^*\\
&=W^\GG{W^\GG}^*(y\tens\I)W^\GG(x\tens\I){W^\GG}^*\\
&=W^\GG{u}(x\tens\I){W^\GG}^*\\
&=W^\GG(x\tens\I)u{W^\GG}^*\\
&=W^\GG(x\tens\I){W^\GG}^*W^\GG{u}{W^\GG}^*\\
&=\Delta_\GG(x)(y\tens\I).
\end{split}
\]
As this is true for all $y\in\sL$, we conclude that $\Delta_\GG(x)\in\sL\vtens\Linf(\GG)$, and so $\Delta_\GG(\sL)\subset\sL\vtens\Linf(\GG)$. But $\sL$ is assumed to be a left coideal: $\Delta_\GG(\sL)\subset\Linf(\GG)\vtens\sL$. It follows that $\Delta_\GG(\sL)\subset\sL\vtens\sL$.

Suppose now that $\Delta_\GG(\sL)\subset\sL\vtens\sL$. Take $y\in\dd{\sL}$ and for any $x\in\sL$ compute
\[
\begin{split}
{W^\GG}^*(y\tens\I)W^\GG(x\tens\I)&={W^\GG}^*(y\tens\I)W^\GG(x\tens\I){W^\GG}^*W^\GG\\
&={W^\GG}^*(y\tens\I)\Delta_\GG(x)W^\GG\\
&={W^\GG}^*\Delta_\GG(x)(y\tens\I)W^\GG\\
&={W^\GG}^*\Delta_\GG(x)W^\GG{W^\GG}^*(y\tens\I)W^\GG\\
&=(x\tens\I){W^\GG}^*(y\tens\I)W^\GG.
\end{split}
\]
Since this is true for all $x\in\sL$, we get ${W^\GG}^*(y\tens\I)W^\GG\in\dd{\sL}\vtens\Linf(\GG)$.
\end{proof}

\begin{proposition}\label{RL}
Let $\sL\subset\Linf(\GG)$ be a left coideal such that $R(\sL)=\sL$, then $\dd{\sL}$ is normal. 
\end{proposition}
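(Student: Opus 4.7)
The natural route is to reduce the statement to Proposition \ref{normalProp}, which tells us that $\dd{\sL}$ is normal if and only if $\Delta_\GG(\sL)\subset\sL\vtens\sL$. So the task becomes: under the hypothesis $R(\sL)=\sL$, upgrade the left-coideal condition $\Delta_\GG(\sL)\subset\Linf(\GG)\vtens\sL$ to a two-sided inclusion.

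First I would recall the compatibility of the unitary antipode with the coproduct, namely $\Delta_\GG\comp R=\flip\comp(R\tens R)\comp\Delta_\GG$. Applying this to an arbitrary $x\in\sL$ and using $R(x)\in\sL$ (by hypothesis), the left-coideal property yields $\Delta_\GG(R(x))\in\Linf(\GG)\vtens\sL$. Acting by $\flip\comp(R\tens R)$ on both sides then pushes this into $R(\sL)\vtens R(\Linf(\GG))=\sL\vtens\Linf(\GG)$, giving the complementary inclusion
\[
\Delta_\GG(\sL)\subset\sL\vtens\Linf(\GG).
\]

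Combining this with the original left-coideal condition places $\Delta_\GG(\sL)$ inside the intersection
\[
\bigl(\sL\vtens\Linf(\GG)\bigr)\cap\bigl(\Linf(\GG)\vtens\sL\bigr),
\]
which by the standard commutation theorem for von Neumann tensor products (Tomita's theorem) equals $\sL\vtens\sL$. Hence $\Delta_\GG(\sL)\subset\sL\vtens\sL$, and Proposition \ref{normalProp} finishes the argument.

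I do not anticipate a serious obstacle here; the only step worth flagging is the use of the intersection identity for von Neumann tensor products, which is well known but could be cited if the authors prefer. Everything else is a direct application of the antipode-coproduct relation together with the hypothesis $R(\sL)=\sL$.
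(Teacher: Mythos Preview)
Your proof is correct and follows essentially the same route as the paper: both use the relation $\Delta_\GG\comp R=\flip\comp(R\tens R)\comp\Delta_\GG$ together with $R(\sL)=\sL$ to obtain $\Delta_\GG(\sL)\subset\sL\vtens\Linf(\GG)$, intersect with the left-coideal condition to get $\Delta_\GG(\sL)\subset\sL\vtens\sL$, and then invoke Proposition~\ref{normalProp}. The only difference is that you make the intersection step $\bigl(\sL\vtens\Linf(\GG)\bigr)\cap\bigl(\Linf(\GG)\vtens\sL\bigr)=\sL\vtens\sL$ explicit via the commutation theorem, whereas the paper passes over it silently.
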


\begin{proof}
Using the fact that $\sL$ is a left coideal and properties of the unitary antipode we compute
\[
\begin{split}
\Delta_\GG(\sL)&=\Delta_\GG\bigl(R(\sL)\bigr)\\
&=\flip\bigl((R\tens{R})\Delta_\GG(\sL)\bigr)\\
&=\flip\bigl((R\tens{R})\Delta_\GG(\sL)\bigr)
\subset\flip\bigl(R(\Linf(\GG))\vtens{R(\sL)}\bigr)=\sL\vtens\Linf(\GG).
\end{split}
\]
Thus $\Delta_\GG(\sL)\subset\sL\vtens\sL$ and $\sL$ is normal by Proposition \ref{normalProp}. 
\end{proof}

\begin{definition}\label{strongly}
Let $\sL$ be a left coideal in $\Linf(\GG)$. We say that $\sL$ is \emph{strongly normal} if $\sL$ is $\tau$-invariant and its co-dual $\dd{\sL}$ satisfies $\hh{R}(\dd{\sL})=\dd{\sL}$.
\end{definition}

By Proposition \ref{RL} any strongly normal coideal is normal. Furthermore, it follows from \cite[Proposition 10.5]{BV} that $\sL\subset\Linf(\GG)$ is a strongly normal coideal if and only if $\dd{\sL}\subset\Linf(\hh{\GG})$ has the structure of the algebra of functions on a locally compact quantum group (with comultiplication inherited from $\Linf(\hh{\GG})$). In other words strongly normal coideals in $\Linf(\GG)$ are exactly co-duals of subalgebras of $\Linf(\hh{\GG})$ preserved by $\Delta_{\hh{\GG}},\hh{R}$ and $\hh{\tau}$.

It turns out that for coideals of a special form (discussed at the end of Section \ref{coduality}) normality and strong normality are equivalent. The next theorem is basically contained in \cite[Theorem 2.11]{VaesVainerman2}.

\begin{theorem}\label{strongThm}
Let $\GG$ and $\HH$ be locally compact quantum groups and let $\HH$ be a closed quantum subgroup of $\GG$ in the sense of Vaes with $\gamma\colon\Linf(\hh{\HH})\hookrightarrow\Linf(\hh{\GG})$. Let $\sL\subset\Linf(\hh{\GG})$ be the image of $\gamma$. Then $\sL$ is a left coideal and $\sL$ is normal if and only if $\sL$ is strongly normal.
\end{theorem}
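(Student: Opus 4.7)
The plan is to prove the two implications separately. For strong normality $\Rightarrow$ normality, the argument is already in the comment following Definition \ref{strongly}: applying Proposition \ref{RL} to $\dd{\sL}\subset\Linf(\GG)$, the hypothesis $R(\dd{\sL})=\dd{\sL}$ yields that $\dd{\dd{\sL}}=\sL$ is normal. The substantive direction is the converse: assume $\sL$ is normal and deduce strong normality.

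Strong normality of $\sL\subset\Linf(\hh{\GG})$ requires two things: that $\sL$ be $\hh{\tau}$-invariant, and that $R(\dd{\sL})=\dd{\sL}$ (here $R$ is the unitary antipode of $\GG=\hh{\hh{\GG}}$). The first is automatic and does not use normality: since $\sL=\gamma(\Linf(\hh{\HH}))$ and $\gamma$ is an injective normal $*$-homomorphism intertwining the comultiplications, \cite[Proposition 5.45]{kv} implies that $\gamma$ also intertwines the scaling groups, so $\hh{\tau}_t(\sL)=\sL$ for every $t\in\RR$. By Proposition \ref{tauinv}, $\dd{\sL}$ is then $\tau$-invariant.

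It remains to deduce $R(\dd{\sL})=\dd{\sL}$. Proposition \ref{normalProp} applied to the coideal $\dd{\sL}\subset\Linf(\GG)$ translates normality of $\sL=\dd{\dd{\sL}}$ into the sub-bialgebra condition $\Delta_\GG(\dd{\sL})\subset\dd{\sL}\vtens\dd{\sL}$. So $\dd{\sL}$ is a $\tau$-invariant von Neumann subalgebra of $\Linf(\GG)$ preserved by $\Delta_\GG$. The substantive claim is that any such sub-bialgebra arising from a closed quantum subgroup is automatically $R$-invariant. For this I would invoke \cite[Theorem 2.11]{VaesVainerman2}: their analysis shows that our normality condition coincides with the notion of normality of $\HH$ used in the theory of extensions, under which the quotient $\GG/\HH$ acquires a locally compact quantum group structure (via the Baaj--Vaes criterion \cite[Proposition 10.5]{BV}). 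Its unitary antipode, by uniqueness, is the restriction of $R^\GG$ to $\dd{\sL}=\Linf(\GG/\HH)$, giving $R(\dd{\sL})=\dd{\sL}$.

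The principal obstacle is precisely this last step: upgrading $\tau$-invariance and the sub-bialgebra property to full $R$-invariance. Classically this is transparent --- inversion on $\GG$ descends to $\GG/\HH$ whenever $\HH$ is normal --- but in the locally compact quantum setting it requires the Vaes--Vainerman machinery. A more self-contained alternative would be to use Proposition \ref{homog} to identify $\dd{\sL}=\bigl\{x\st\th(x)=x\tens\I\bigr\}$ and $R(\dd{\sL})=\bigl\{x\st\thL(x)=\I\tens x\bigr\}$ and then exploit the identity \eqref{DelTh} together with $\Delta_\GG(\dd{\sL})\subset\dd{\sL}\vtens\dd{\sL}$ to show that these two sets coincide.
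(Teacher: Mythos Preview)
Your proposal is correct, and in fact the paper's proof is precisely your ``self-contained alternative'' rather than a black-box appeal to \cite[Theorem 2.11]{VaesVainerman2} (the paper only remarks that the result is ``basically contained'' there). After noting $\hh{\tau}$-invariance of $\sL$ and translating normality of $\sL$ into $\Delta_\GG(\dd{\sL})\subset\dd{\sL}\vtens\dd{\sL}$ via Proposition~\ref{normalProp}, the paper uses Proposition~\ref{homog} to identify $\dd{\sL}$ and $R(\dd{\sL})$ with the $\th$- and $\thL$-fixed points, and then applies \eqref{DelTh} exactly as you suggest.

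The one ingredient your sketch leaves implicit, and which the paper makes explicit, is the density formula
\[
\dd{\sL}=\linW\bigl\{(\omega\tens\id)\Delta_\GG(y)\st y\in\dd{\sL},\ \omega\in\B(\Ltwo(\GG))_*\bigr\}
\]
from \cite[Corollary 2.7]{proj}. This is what turns the identity $(\th\tens\id)\Delta_\GG=(\id\tens\thL)\Delta_\GG$ restricted to $\dd{\sL}$ into the conclusion $\thL(\dd{\sL})=\I\tens\dd{\sL}$, yielding $R(\dd{\sL})=\dd{\sL}$. Without this Podle\'s-type density, knowing only that $(\id\tens\thL)\Delta_\GG(x)\in\dd{\sL}\vtens\I\vtens\dd{\sL}$ for $x\in\dd{\sL}$ does not immediately give control of $\thL(x)$ itself, so this step is worth isolating.
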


\begin{proof}
We have $\Delta_{\hh{\GG}}(\sL)\subset\sL\vtens\sL$, so in particular $\sL$ is a left coideal. Moreover $\sL$ is $\tau$-invariant, so we must show that if $\sL$ is normal then $R(\dd{\sL})=\dd{\sL}$.

As $\dd{\sL}$ is a left coideal in $\Linf(\GG)$, $\Delta_{\GG}$ restricted to $\dd{\sL}$ is an action of $\GG$ on the von Neumann algebra $\dd{\sL}$, by \cite[Corollary 2.7]{proj} we have
\begin{equation}\label{slPod}
\dd{\sL}=\linW\bigl\{(\omega\tens\id)\Delta_\GG(y)\st{y}\in\dd{\sL},\:\omega\in\B(\Ltwo(\GG))_*\bigr\}.
\end{equation}

Let $\th$ and $\thL$ be the right and left quantum group homomorphism associated with the closed quantum subgroup $\HH$ of $\GG$ (cf.~Proposition \ref{homog} and preceding remarks). Then we have $\dd{\sL}=\bigl\{x\in\Linf(\GG)\st\th(x)=x\tens\I\bigr\}$ and $R\bigl(\dd{\sL}\bigr)=\bigl\{x\in\Linf(\GG)\st\thL(x)=\I\tens{x}\bigr\}$.

The assumption that $\sL$ is normal means that $\Delta_\GG(\dd{\sL})\subset\dd{\sL}\vtens\dd{\sL}$. Thus by \eqref{slPod} we have
\begin{equation}\label{th1}
\linW\bigl\{(\omega\tens\id\tens\id)((\th\tens\id)\Delta_\GG(x))|x\in\dd{\sL},\:\omega\in\B(\Ltwo(\GG))_*\bigr\}=\I\tens\dd{\sL}.
\end{equation}
On the other hand, due to \eqref{DelTh}, the left hand side of \eqref{th1} is
\[
\linW\bigl\{(\omega\tens\id\tens\id)((\id\tens\thL)\Delta_\GG(x))|x\in\dd{\sL},\:\omega\in\B(\Ltwo(\GG))_*\bigr\}
\]
which by \eqref{slPod} is equal to $\thL\bigl(\dd{\sL}\bigr)$. Therefore $R(\dd{\sL})\subset\dd{\sL}$. The unitary antipode is involutive, so $R(\dd{\sL})=\dd{\sL}$.
\end{proof}

\begin{remark}\label{strongRem}
In the situation from Theorem \ref{strongThm} we easily find that the following conditions are equivalent
\begin{enumerate}
\item $\sL$ is normal,
\item $\sL$ is strongly normal,
\item For any $y\in\dd{\sL}$ we have $\thL(x)=x\tens\I$.
\end{enumerate}
These statements are also equivalent to normality of the quantum subgroup $\HH$ (cf.~\cite{VaesVainerman2}).
\end{remark}

\section{Quantum groups with projection}\label{projSect}

Let us recall the definition of a locally compact quantum group with projection from \cite{proj}.

\begin{definition}\label{QGproj}
Let $\GG$ and $\HH$ be locally compact quantum groups. We say that $\GG$ is a \emph{quantum group with projection onto $\HH$} if there exists a unital normal $*$-homomorphism $\pi\colon\Linf(\HH)\to\Linf(\GG)$ and a right quantum group homomorphism $\th\colon\Linf(\GG)\to\Linf(\GG)\vtens\Linf(\HH)$ (see Section \ref{cqs}) such that
\[
\begin{split}
(\pi\tens\pi)\comp\Delta_\HH&=\Delta_\GG\comp\pi,\\
(\pi\tens\id)\comp\Delta_\HH&=\th\comp\pi.
\end{split}
\]
\end{definition}

The results of \cite[Section 3]{proj} show, in particular, that $\pi$ is injective, so considering $\Linf(\HH)$ as embedded into $\Linf(\GG)$ we have that $\Delta_\HH=\bigl.\Delta_\GG\bigr|_{\Linf(\HH)}$. 

Let us give a short recapitulation of the main results of \cite{proj}. It turns out that we also have $\Linf(\hh{\HH})\subset\Linf(\hh{\GG})$ and $\Delta_{\hh{\HH}}=\bigl.\Delta_{\hh{\GG}}\bigr|_{\Linf(\hh{\HH})}$. In particular all algebras are naturally realized as von Neumann algebras acting on $\Ltwo(\GG)$. Moreover, the bicharacter $U$ (\cite{MRW}) corresponding to the right quantum group homomorphism $\th$ can be identified with the reduced bicharacter (the multiplicative unitary, cf.~discussion in \cite{modmu}) of $\HH$. Thus we have two multiplicative unitaries $W^\GG$ and $U$ on $\Ltwo(\GG)$ and the latter is a bicharacter. 

Defining $F$ by
\begin{equation}\label{WFU}
W^\GG=FU,
\end{equation}
i.e.~$F=W^\GG{U^*}$ we obtain a unitary operator acting on $\Ltwo(\GG)\tens\Ltwo(\GG)$ and the ultra-weak closure $\sN$ of the set
\[
\bigl\{(\omega\tens\id)F\st\omega\in\B(\Ltwo(\GG))_*\bigr\}
\]
is a von Neumann subalgebra of $\Linf(\GG)$ which coincides with
\[
\bigl\{x\in\Linf(\GG)\st\th(x)=x\tens\I\bigr\}.
\]
We call $\sN$ the \emph{fixed point subalgebra} corresponding the $\GG$ and $\HH$.

The algebra $\Linf(\GG)$ is shown to be isomorphic to a crossed product of $\sN$ by an action of $\hh{\HH}^\op$ in such a way that $\th$ can be identified with the dual action of $\HH$ (after an appropriate isomorphism, cf.~\cite[Proposition 4.1]{proj}). Furthermore the comultiplication $\Delta_\GG$ restricted to $\sN$ becomes a map $\sN\to\sN\boxtimes\sN$, where the latter algebra is defined to be the von Neumann algebra generated by two copies of $\sN$ inside $\Linf(\GG)\vtens\Linf(\GG)$, namely $\sN\tens\I$ and $\thL(\sN)$, where $\thL\colon\Linf(\GG)\to\Linf(\HH)\vtens\Linf(\GG)$ is a left quantum group homomorphism related to $\th$ by
\[
(\id\tens\thL)\comp\Delta_\GG=(\th\tens\id)\comp\Delta_\GG
\]
(this equality determines $\thL$, cf.~Section \ref{cqs}). The algebra $\sN\boxtimes\sN$ was called the \emph{braided tensor product} of $\sN$ with itself in \cite{proj}.

Let us note here that \cite[Proof of Theorem 4.7]{proj} shows, in particular, that
\begin{equation}\label{proof47}
(\id\tens\Delta_\GG)F=F_{12}\bigl((\id\tens\thL)F\bigr).
\end{equation}

In Section \ref{ExtSect} we will need the following fact.

\begin{lemma}\label{lemDDD}
We have
\begin{equation}\label{DDD}
F_{23}F_{12}F_{23}^*=F_{12}\bigl((\id\tens\thL)F\bigr).
\end{equation}
\end{lemma}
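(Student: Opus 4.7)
The plan is to reduce the identity, via equation \eqref{proof47}, to the single commutation $[F_{12},U_{23}]=0$, and then to establish this commutation using the fixed-point characterisation of $\sN$. First I would invoke \eqref{proof47} to rewrite the right hand side as $(\id\tens\Delta_\GG)(F)$, so that the identity to prove becomes
\[
F_{23}F_{12}F_{23}^* \;=\; (\id\tens\Delta_\GG)(F).
\]
Since $F\in\Linf(\hh{\GG})\vtens\Linf(\GG)$ and $\Delta_\GG(x)=W^\GG(x\tens\I){W^\GG}^*$ for $x\in\Linf(\GG)$, a direct unpacking gives $(\id\tens\Delta_\GG)(F)=W^\GG_{23}F_{12}{W^\GG_{23}}^*$. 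Substituting $W^\GG=FU$, so that $W^\GG_{23}=F_{23}U_{23}$, this expression becomes $F_{23}U_{23}F_{12}U_{23}^*F_{23}^*$, and the assertion collapses to $U_{23}F_{12}U_{23}^*=F_{12}$.

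The operator $F_{12}$ acts non-trivially only on positions $1,2$, with its second leg taking values in $\sN\subset\Linf(\GG)$, while $U_{23}$ acts non-trivially only on positions $2,3$, with its first leg in $\Linf(\hh{\HH})$; since positions $1$ and $3$ are disjoint, the commutation $[F_{12},U_{23}]=0$ reduces to the inclusion $\sN\subset\Linf(\hh{\HH})'$. To prove this inclusion, recall from Section \ref{projSect} that $\sN=\bigl\{x\in\Linf(\GG)\st\th(x)=x\tens\I\bigr\}$, and that $\th$ is implemented by $U$ via $\th(x)=U(x\tens\I)U^*$. Thus $x\in\sN$ if and only if $(x\tens\I)U=U(x\tens\I)$; slicing the second leg against an arbitrary $\omega\in\B(\Ltwo(\GG))_*$ yields $\bigl[x,(\id\tens\omega)(U)\bigr]=0$. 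Since such slices are ultraweakly dense in $\Linf(\hh{\HH})$, we conclude $x\in\Linf(\hh{\HH})'$, and the argument is complete.

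The main conceptual step is recognising the reduction in the first paragraph; once the identity has been transformed into $F_{23}F_{12}F_{23}^*=(\id\tens\Delta_\GG)(F)$, the substitution $W^\GG=FU$ automatically isolates the commutator $[F_{12},U_{23}]$, and this commutator vanishes by the built-in fixed-point characterisation of $\sN$. I do not expect any serious obstacle beyond careful bookkeeping of leg positions and a single invocation of the density of slices of a multiplicative unitary.
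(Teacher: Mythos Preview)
Your proof is correct and takes a genuinely different route from the paper's argument. Both proofs begin by invoking \eqref{proof47} to rewrite the right hand side as $(\id\tens\Delta_\GG)F=W^\GG_{23}F_{12}{W^\GG_{23}}^*$, but then diverge. The paper substitutes $F=W^\GG U^*$ on the \emph{inside}, obtaining $W^\GG_{23}W^\GG_{12}U_{12}^*{W^\GG_{23}}^*$, and then performs a chain of manipulations using the bicharacter identity ${W^\GG_{12}}^*U_{23}W^\GG_{12}=U_{13}U_{23}$ together with the pentagon equation for $U$ to arrive at $F_{23}F_{12}F_{23}^*$. You instead substitute $W^\GG=FU$ on the \emph{outside}, which immediately reduces the statement to the single commutation $[F_{12},U_{23}]=0$; you then dispatch this by the observation that $\sN\subset\Linf(\hh{\HH})'$, which follows from the implementation $\th(x)=U(x\tens\I)U^*$ and density of the left slices of $U$ in $\Linf(\hh{\HH})$. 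Your argument is shorter and more conceptual (it isolates exactly the structural fact being used, namely that the second leg of $F$ lies in the fixed-point algebra for the $U$-conjugation), whereas the paper's computation stays entirely at the level of unitary identities and never invokes the fixed-point description of $\sN$. One minor point worth making explicit in your write-up is that the inclusion $F\in\Linf(\hh{\GG})\vtens\sN$ (not merely that slices $(\omega\tens\id)F$ lie in $\sN$) is what justifies the leg-by-leg commutation; this is standard, but you use it implicitly.
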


\begin{proof}
The unitary $U\in\Linf(\hh{\GG})\vtens\Linf(\HH)$ is a bicharacter, so in particular $(\Delta_{\hh{\GG}}\tens\id)U=U_{23}U_{13}$. Applying $\flip\tens\id$ to both sides of this equation we obtain
\begin{equation}\label{WFW}
{W^{\GG}_{12}}^*U_{23}W^{\GG}_{12}=U_{13}U_{23}.
\end{equation}

By \eqref{proof47} the right hand side of \eqref{DDD} is equal to $(\id\tens\Delta_\GG)F$ which is $W^\GG_{23}F_{12}{W^\GG_{23}}^*$. Using \eqref{WFU}, \eqref{WFW} and the pentagon equation for $U$ we compute
\[
\begin{split}
W^\GG_{23}F_{12}{W^\GG_{23}}^*&=W^\GG_{23}W^\GG_{12}U_{12}^*{W^\GG_{23}}^*\\
&=W^\GG_{23}W^\GG_{12}(U_{23}^*U_{13}^*)U_{13}U_{23}U_{12}^*{W^\GG_{23}}^*\\
&=W^\GG_{23}W^\GG_{12}({W^\GG_{12}}^*U_{23}^*W^\GG_{12})U_{13}U_{23}U_{12}^*{W^\GG_{23}}^*\\
&=W^\GG_{23}U_{23}^*W^\GG_{12}(U_{13}U_{23}U_{12}^*){W^\GG_{23}}^*\\
&=W^\GG_{23}U_{23}^*W^\GG_{12}(U_{12}^*U_{23}){W^\GG_{23}}^*=F_{23}F_{12}F_{23}^*
\end{split}
\]
which proves \eqref{DDD}.
\end{proof}

\begin{remark}\label{endsect}
As mentioned in \cite[Remark 3.6]{proj} the dual $\hh{\GG}$ of $\GG$ automatically is a locally compact quantum group with projection onto $\hh{\HH}$. Thus there are corresponding maps $\hh{\pi}\colon\Linf(\hh{\HH})\to\Linf(\hh{\GG})$ and $\hh{\th}\colon\Linf(\hh{\GG})\to\Linf(\hh{\GG})\vtens\Linf(\hh{\HH})$ as well as the left version $\hhthL\colon\Linf(\hh{\GG})\to\Linf(\hh{\HH})\vtens\Linf(\hh{\GG})$. As $\hh{\th}$ is implemented by $\hh{U}$ (cf.~\cite[Remark 3.6]{proj}), the fixed point subalgebra
\[
\hh{\sN}=\bigl\{y\in\Linf(\hh{\GG})\st\hh{\th}(y)=y\tens\I\bigr\}
\]
can be identified with the co-dual $\dd{\Linf(\HH)}$ of $\Linf(\HH)$ (cf.~end of Section \ref{coduality}).
\end{remark}

\section{Extensions of quantum groups}\label{ExtSect}

We begin this section with the definition of an extension of locally compact quantum groups from \cite{VaesPhd,VaesVainerman}. Let us first recall the definition of a measured quantum homogeneous space already mentioned at the end of Section \ref{coduality}. If $\GG$ and $\GG_1$ are locally compact quantum groups and $\GG_1$ is a closed quantum subgroup of $\GG$ with an embedding $\beta\colon\Linf(\hh{\GG_1})\hookrightarrow\Linf(\hh{\GG})$ then the measured quantum homogeneous space $\GG/\GG_1$ is defined by setting $\Linf(\GG/\GG_1)$ to be the co-dual of the coideal $\beta\bigl(\Linf(\hh{\GG_1})\bigr)\subset\Linf(\hh{\GG})$.

\begin{definition}\label{exten}
Let $\GG,\GG_1$ and $\GG_2$ be locally compact quantum groups. We say that $\GG$ is an \emph{extension of $\GG_2$ by $\GG_1$} if there exist normal, unital and injective $*$-homomorphisms
\[
\alpha\colon\Linf(\GG_2)\longrightarrow\Linf(\GG)\quad\text{and}\quad\beta\colon\Linf(\hh{\GG_1})\longrightarrow\Linf(\hh{\GG})
\]
commuting with respective comultiplications and $\alpha\bigl(\Linf(\GG_2)\bigr)=\Linf(\GG/\GG_1)$.
\end{definition}

The actual definition (\cite[Definition 3.5.2]{VaesPhd}, \cite[Definition 3.2]{VaesVainerman}) is formulated slightly differently from the one given above. First of all $\GG_1$ is replaced by $\hh{\GG_1}$, i.e.~$\beta$ maps $\Linf(\GG_1)$ into $\Linf(\hh{\GG})$ not $\Linf(\hh{\GG_1})$ into $\Linf(\hh{\GG})$. Secondly, in \cite{VaesPhd,VaesVainerman} the condition that $\alpha\bigl(\Linf(\GG_2)\bigr)=\Linf(\GG/\GG_1)$ (which is there rather that $\alpha\bigl(\Linf(\GG_2)\bigr)=\Linf(\GG/\hh{\GG_1})$) is formulated by demanding that $\alpha\bigl(\Linf(\GG_2)\bigr)$ be equal to the fixed point subalgebra for an action of $\hh{\GG_1}$ on $\GG$ related to the right quantum group homomorphism associated with the fact that $\hh{\GG_1}$ is a closed quantum subgroup of $\GG$ via $\beta$. This action is implemented by a unitary operator and the property of being fixed under this action translates into commutation with slices of this operator. Furthermore the paper \cite{VaesVainerman} and thesis \cite{VaesPhd} use left Haar measures to construct $\Ltwo(\GG)$ which results in all algebras of functions on duals being commutants of those that we use. Taking all these points into consideration we arrive at the formulation in Definition \ref{exten}. Thus our modification consists of rewriting the original definition in terms of co-duality while using right Haar measures and replacing $\hh{\GG_1}$ by $\GG_1$. The latter change is introduced in order to be consistent with the classical definition of an extension. Indeed, if $G,G_1$ and $G_2$ are locally compact groups then $G$ is an extension of $G_2$ by $G_1$ if and only if we have $\alpha\colon\Linf(G_2)\to\Linf(G)$ and $\beta\colon\Linf(\hh{G_1})\to\Linf(\hh{\GG})$ as in Definition \ref{exten} ($\beta$ is an inclusion of group von Neumann algebras corresponding to $G_1$ being a closed subgroup of $G$, while $\alpha$ maps functions on the quotient group $G_2=G/G_1$ to functions on $G$ constant on cosets). Contrary to that, the definitions in \cite{VaesPhd,VaesVainerman} would classically mean that $G$ is an extension of $G_2$ by $\hh{G_1}$.

Note that the demand
\[
\alpha\bigl(\Linf(\GG_2)\bigr)=\Linf(\GG/\GG_1)=\dd{\beta\bigl(\Linf(\GG_1)\bigr)}
\]
of  Definition \ref{exten} may be equivalently phrased as
\[
\dd{\alpha\bigl(\Linf(\GG_2)\bigr)}=\beta\bigl(\Linf(\GG_1)\bigr).
\]
It follows that if $\GG$ is an extension of $\GG_2$ by $\GG_1$ then $\GG_2$ is determined (up to isomorphism) by $\GG,\GG_1$ and $\beta$ and similarly $\GG_1$ is determined by $\GG,\GG_2$ and $\alpha$. In particular, given $\GG,\GG_2$ and $\alpha\colon\Linf(\GG_2)\hookrightarrow\Linf(\GG)$ it makes sense to ask if $\GG$ is \emph{an extension of $\GG_2$}. This is precisely the question we want to address in the situation when $\GG$ is a quantum group with projection.

Let $\GG$ be a locally compact quantum group with projection onto $\HH$ with $\pi\colon\Linf(\HH)\to\Linf(\GG)$ and $\th\colon\Linf(\GG)\to\Linf(\GG)\vtens\Linf(\HH)$ as in Definition \ref{QGproj}. By putting $\GG_2=\HH$ and $\alpha=\pi$ we obtain a part of the structure described in Definition \ref{exten}. The obvious facts are contained in the following theorem.

\begin{theorem}
Let $\GG$ be a locally compact quantum group with projection onto $\HH$. Then the following are equivalent
\begin{enumerate}
\item\label{extThm11} $\GG$ is an extension of $\HH$,
\item\label{extThm12} the coideal $\Linf(\HH)\subset\Linf(\GG)$ is strongly normal,
\item the coideal $\dd{\Linf(\HH)}\subset\Linf(\hh{\GG})$ is $\hh{R}$-invariant.
\end{enumerate}
\end{theorem}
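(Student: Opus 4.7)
My strategy is to reduce each of the three conditions to a single common statement, namely the $\hh R$-invariance of $\dd{\Linf(\HH)}$, by first collecting the properties of $\Linf(\HH)\subset\Linf(\GG)$ that come for free from the projection data. Since $\pi\colon\Linf(\HH)\to\Linf(\GG)$ is an injective normal unital $*$-homomorphism intertwining $\Delta_\HH$ and $\Delta_\GG$ (Definition \ref{QGproj}), the result recalled in Section \ref{lcqgs} shows that it also intertwines the scaling groups, so $\Linf(\HH)$ is $\tau$-invariant as a subalgebra of $\Linf(\GG)$. Moreover $\Delta_\GG(\Linf(\HH))\subset\Linf(\HH)\vtens\Linf(\HH)$, so Proposition \ref{normalProp} gives that $\dd{\Linf(\HH)}\subset\Linf(\hh{\GG})$ is a normal left coideal (i.e.~$\Delta_{\hh{\GG}}$ maps it into $\dd{\Linf(\HH)}\vtens\dd{\Linf(\HH)}$), and Proposition \ref{tauinv} then gives that $\dd{\Linf(\HH)}$ is $\hh{\tau}$-invariant.

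With these automatic properties in hand, the equivalence (2)$\Leftrightarrow$(3) is immediate from Definition \ref{strongly}: strong normality of $\Linf(\HH)$ asks for $\tau$-invariance of $\Linf(\HH)$ together with $\hh{R}$-invariance of $\dd{\Linf(\HH)}$, and the first ingredient is free. For (1)$\Leftrightarrow$(2) I would unravel Definition \ref{exten} with $\GG_2=\HH$ and $\alpha=\pi$. Using involutivity of co-duality, the defining condition $\pi(\Linf(\HH))=\Linf(\GG/\GG_1)=\dd{\beta(\Linf(\hh{\GG_1}))}$ is equivalent to $\dd{\Linf(\HH)}=\beta(\Linf(\hh{\GG_1}))$. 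Hence $\GG$ is an extension of $\HH$ precisely when $\dd{\Linf(\HH)}$ can be realised as the image of a comultiplication-preserving embedding of the function algebra of some locally compact quantum group. By \cite[Proposition 10.5]{BV}, in the form recalled after Definition \ref{strongly}, this is equivalent to $\dd{\Linf(\HH)}$ being preserved by $\Delta_{\hh{\GG}}$, $\hh{R}$ and $\hh{\tau}$. Since stability under $\Delta_{\hh{\GG}}$ and $\hh{\tau}$ is already automatic from the previous paragraph, the condition reduces to (3), which by the previous step coincides with (2).

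The whole argument is essentially bookkeeping and there is no hard analytic step. If anything requires care, it is verifying that the normality and $\hh{\tau}$-invariance hypotheses of the \cite{BV} result really are automatic from the projection data, and translating cleanly between the co-duality formulation of Definition \ref{exten} and the characterisation of strongly normal coideals in terms of that result. The key structural observation is that out of the three conditions picking out quantum-group function subalgebras of $\Linf(\hh{\GG})$, precisely one — the $\hh{R}$-invariance — carries genuine content in the presence of a projection; this explains the single-condition form of (3).
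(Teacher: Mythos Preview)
Your overall strategy matches the paper's, and the equivalence (2)$\Leftrightarrow$(3) is handled exactly as the paper does it. However, there is a genuine slip in your first paragraph that undermines the (1)$\Leftrightarrow$(2) argument as you have written it.

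You claim that Proposition~\ref{normalProp} gives that $\dd{\Linf(\HH)}$ is ``a normal left coideal (i.e.~$\Delta_{\hh{\GG}}$ maps it into $\dd{\Linf(\HH)}\vtens\dd{\Linf(\HH)}$)''. This conflates two different notions. Normality in the sense of Definition~\ref{normalDef} is a condition involving conjugation by $W^{\hh{\GG}}$, not invariance of the coideal under its own comultiplication; Proposition~\ref{normalProp} says precisely that these are \emph{dual} to one another: $\Delta_\GG(\sL)\subset\sL\vtens\sL$ is equivalent to $\dd{\sL}$ being normal, not to $\Delta_{\hh{\GG}}(\dd{\sL})\subset\dd{\sL}\vtens\dd{\sL}$. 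The latter would, by the same proposition applied on the dual side, be equivalent to $\sL=\Linf(\HH)$ being normal in $\Linf(\GG)$ --- and that is exactly condition~(1) of Theorem~\ref{main}, which is \emph{not} automatic from the projection data (indeed, the whole point of Theorem~\ref{main} is to characterise when it holds). So the $\Delta_{\hh{\GG}}$-stability of $\dd{\Linf(\HH)}$ cannot be listed among the ``free'' properties.

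The fix is easy and is implicit in the paper's route through the discussion after Definition~\ref{strongly}: in the direction (3)$\Rightarrow$(1), once you \emph{assume} $\hh{R}$-invariance of the left coideal $\dd{\Linf(\HH)}$, the computation in the proof of Proposition~\ref{RL} (applied on the $\hh{\GG}$ side) gives $\Delta_{\hh{\GG}}(\dd{\Linf(\HH)})\subset\dd{\Linf(\HH)}\vtens\dd{\Linf(\HH)}$. Combined with the genuinely automatic $\hh{\tau}$-invariance, \cite[Proposition 10.5]{BV} then applies and yields the locally compact quantum group $\GG_1$ required for~(1). In other words, $\Delta_{\hh{\GG}}$-stability is a consequence of condition~(3), not of the projection structure; reorganising your argument accordingly makes it correct and essentially identical to the paper's proof.
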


\begin{proof}
$\GG$ is an extension of $\HH$ if and only if the co-dual of $\Linf(\HH)\subset\Linf(\GG)$ is (the image under $\beta$ of) $\Linf(\GG_1)$ for some locally compact quantum group $\GG_1$. Thus \eqref{extThm11} is equivalent to \eqref{extThm12} (cf.~Definition \ref{strongly}). Note now that $\Linf(\HH)\subset\Linf(\GG)$ is $\tau$-invariant (regardless of whether $\GG$ is an extension of $\HH$), so that $\dd{\Linf(\HH)}\subset\Linf(\hh{\GG})$ is $\hh{\tau}$-invariant by Proposition \ref{tauinv}. Thus $\Linf(\HH)\subset\Linf(\GG)$ is strongly normal if and only if $\dd{\Linf(\HH)}\subset\Linf(\hh{\GG})$ is $\hh{R}$-invariant (cf.~Proposition \ref{RL}).
\end{proof}

In the proof of the next theorem we will consistently use the widely adopted convention of quantum group theory that for an element $X$ of a tensor product of two von Neumann algebras, say, $\sA\vtens\sB$ the symbol $\hh{X}$ denotes $\flip(X^*)\in\sB\vtens\sA$. 

\begin{theorem}\label{main}
Let $\GG$ be a locally compact quantum group with projection onto $\HH$. Denote by $\sL$ the copy $\pi\bigl(\Linf(\HH)\bigr)$ inside $\Linf(\GG)$ and let $\sN\subset\Linf(\GG)$ be the fixed point subalgebra for $\th$. Then the following are equivalent:
\begin{enumerate}
\item\label{main1} $\sL$ is normal,
\item\label{main2} $\sL$ is strongly normal,
\item\label{main3} for any $y\in\dd{\sL}$ we have $\hhthL(y)=\I\tens{y}$,
\item\label{main4} $\sL$ commutes with $\sN$.
\end{enumerate}
\end{theorem}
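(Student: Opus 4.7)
The strategy is to prove (1) $\Leftrightarrow$ (2) $\Leftrightarrow$ (3) by invoking Theorem~\ref{strongThm} and Proposition~\ref{homog} in a dualised form, and to close (3) $\Leftrightarrow$ (4) via an explicit identity for $\hhthL$ obtained from \eqref{proof47} together with the pentagon equation for $W^\GG$.

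For the first block, $\pi$ intertwines the comultiplications, so $\Delta_\GG(\sL)\subset\sL\vtens\sL$, and $\pi$ also intertwines the scaling groups, hence $\sL$ is automatically $\tau$-invariant. Viewing $\pi\colon\Linf(\HH)\hookrightarrow\Linf(\GG)$ as the closed quantum subgroup inclusion $\hh{\HH}\subset\hh{\GG}$ in the sense of Vaes, Theorem~\ref{strongThm} with $\GG,\HH$ replaced by $\hh{\GG},\hh{\HH}$ delivers (1) $\Leftrightarrow$ (2). The dual form of Proposition~\ref{homog} gives $\hh{R}\bigl(\dd{\sL}\bigr)=\bigl\{y\in\Linf(\hh{\GG})\st\hhthL(y)=\I\tens y\bigr\}$, which combined with the automatic $\tau$-invariance and involutivity of $\hh{R}$ yields (2) $\Leftrightarrow$ (3).

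To prove (3) $\Leftrightarrow$ (4), I would first establish the identity
\[
(\id\tens\thL)F=U_{12}F_{13}U_{12}^*.
\]
Indeed, $(\id\tens\Delta_\GG)W^\GG=W^\GG_{12}W^\GG_{13}$ by the pentagon, and $(\id\tens\Delta_\GG)U=(\id\tens\Delta_\HH)U=U_{12}U_{13}$ since $U$ is a bicharacter; with $W^\GG=FU$ this gives $(\id\tens\Delta_\GG)F=W^\GG_{12}W^\GG_{13}(U_{12}U_{13})^{-1}=F_{12}U_{12}F_{13}U_{12}^*$, and comparison with \eqref{proof47} yields the claimed identity. Applying the same reasoning to the induced projection on $\hh{\GG}$ (cf.\ Remark~\ref{endsect}) produces the dual counterpart
\[
(\id\tens\hhthL)\hh{F}=\hh{U}_{12}\hh{F}_{13}\hh{U}_{12}^*.
\]

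Since the first-leg slices of $\hh{F}$ generate $\hh{\sN}=\dd{\sL}$, condition (3) is equivalent to $(\id\tens\hhthL)\hh{F}=\hh{F}_{13}$, and by the identity just obtained, to $[\hh{U}_{12},\hh{F}_{13}]=0$. The remaining task is leg bookkeeping. A standard slicing argument identifies the leg algebras of $F$: slices on the first leg give $\sN$ by definition, while slices on the second leg give $\hh{\sN}$ since $\hh{F}=\flip(F^*)$; hence $F\in\hh{\sN}\vtens\sN$ and $\hh{F}\in\sN\vtens\hh{\sN}$. As $\hh{U}\in\sL\vtens\Linf(\hh{\HH})$, in the three-leg picture $\hh{U}_{12}$ and $\hh{F}_{13}$ overlap only in leg~$1$, where they live in $\sL$ and $\sN$ respectively; their commutator therefore vanishes if and only if $\sL$ commutes with $\sN$, which is (4). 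The main technical obstacle is the justification that the first leg of $\hh{F}$ actually lies in $\sN$, which rests on the commutation theorem for von Neumann algebra tensor products.
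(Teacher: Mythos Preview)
Your treatment of (1)\,$\Leftrightarrow$\,(2)\,$\Leftrightarrow$\,(3) is fine and coincides with the paper's (which simply quotes Theorem~\ref{strongThm} and Remark~\ref{strongRem}). Your derivation of the identity $(\id\tens\thL)F=U_{12}F_{13}U_{12}^*$ is also correct, and its dual version indeed reduces (3) to the commutator condition $[\hh{U}_{12},\hh{F}_{13}]=0$.

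The genuine gap is in the last step, where you claim $\hh{F}=\flip(F^*)$ and conclude that the first leg of $\hh{F}$ lies in $\sN$. Here $\hh{F}$ has two different meanings that you have conflated: the symbol built from $F$ by the hat convention $\hh{X}=\flip(X^*)$, and the unitary playing the role of $F$ for the dual projection (what the paper calls $\hh{D}$, defined by $W^{\hh{\GG}}=\hh{D}\hh{U}$). These do \emph{not} coincide: writing $D=U^*W^\GG$ one has $\hh{D}=\flip(D^*)$, whereas $\flip(F^*)=\flip\bigl((W^\GG U^*)^*\bigr)=\hh{U}\,\flip(D^*)\,\hh{U}^*$, so $\hh{D}=\flip(F^*)$ would force $U$ and $W^\GG$ to commute. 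What is actually true (and what the paper establishes) is $D=(\hh{R}\tens R)F$, so the right leg of $D$, and hence the first leg of $\hh{D}=\hh{F}$, generates $R(\sN)$ rather than $\sN$. Your claim that the left leg of $F$ is $\hh{\sN}$ is likewise off by the unitary antipode: it equals $\hh{R}(\hh{\sN})$, and asserting $\hh{R}(\hh{\sN})=\hh{\sN}$ is precisely condition~(2), so the argument as written is circular.

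The repair is straightforward and is exactly what the paper does: once you know the first leg of $\hh{F}$ is $R(\sN)$, the commutator condition $[\hh{U}_{12},\hh{F}_{13}]=0$ becomes ``$\sL$ commutes with $R(\sN)$'', and since $R(\sL)=\sL$ this is equivalent to (4). So your overall strategy is sound and essentially the same as the paper's (the paper routes through multiplicativity of $D$ and the identity $D_{23}D_{12}D_{23}^*=D_{12}U_{23}^*D_{13}U_{23}$, but arrives at the same commutator criterion); only the bookkeeping of which leg carries $\sN$ versus $R(\sN)$ needs to be corrected.
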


\begin{proof}
First let us gather some necessary information. As explained in Section \ref{projSect}, $\hh{\GG}$ is a quantum group with projection onto $\hh{\HH}$ with corresponding maps $\hh{\pi}\colon\Linf(\hh{\HH})\to\Linf(\hh{\GG})$ and $\hh{\th}\colon\Linf(\hh{\GG})\to\Linf(\hh{\GG})\vtens\Linf(\hh{\HH})$ (the left version of $\hh{\th}$ was already used in the formulation of condition \eqref{main3}). The Kac-Takesaki operator $W^{\hh{\GG}}$ has the decomposition
\[
W^{\hh{\GG}}=\hh{D}\hh{U}
\]
(for some unitary $\hh{D}$) which is analogous to \eqref{WFU} for $\hh{\GG}$. By the dual version of Lemma \ref{lemDDD}
\begin{equation}\label{dualDDD}
\hh{D}_{23}\hh{D}_{12}{\hh{D}_{23}}^*=\hh{D}_{12}\bigl((\id\tens\hhthL)\hh{D}\bigr).
\end{equation}

Note now that
\[
W^\GG=\flip\bigl({W^{\hh{\GG}}}^*\bigr)=UD,
\]
where $D=\flip(\hh{D}^*)$, and by \eqref{WFU} we obtain $D=U^*FU$, or in other words,
\begin{equation}\label{WDU}
D=U^*W^\GG.
\end{equation}
In particular, as the unitary antipodes of $\hh{\GG}$ and $\GG$ restricted to $\Linf(\hh{\HH})$ and $\Linf(\HH)$ respectively coincide with unitary antipodes of $\hh{\HH}$ and $\HH$, we have
\[
(\hh{R}\tens{R})F=(\hh{R}\tens{R})(W^\GG{U^*})=\bigl((\hh{R}\tens{R})U\bigr)^*\bigl((\hh{R}\tens{R})W^\GG\bigr)=
U^*W^\GG=D
\]
(cf.~e.g.~\cite[Lemma 40]{modmu}). This means that
\[
\bigl\{(\omega\tens\id)D\st\omega\in\B(\Ltwo(\GG))_*\bigr\}
\]
is an ultra-weakly dense subspace of $R(\sN)$.

Finally, using pentagon equations for $W^\GG$ and $U$ and the fact that $U$ is a bicharacter, we find that
\[
\begin{split}
U^*_{23}W_{23}U_{12}^*W_{12}W_{23}^*U_{23}&=U^*_{23}W_{23}U_{12}^*W_{23}^*W_{23}W_{12}W_{23}^*U_{23}\\
&=U^*_{23}U_{13}^*U_{12}^*W_{12}W_{13}U_{23}\\
&=U_{12}^*U^*_{23}W_{12}W_{13}U_{23}\\
&=U_{12}^*W_{12}W_{12}^*U^*_{23}W_{12}W_{13}U_{23}\\
&=U_{12}^*W_{12}U_{23}^*U_{13}^*W_{13}U_{23}
\end{split}
\]
which by \eqref{WDU} means that
\begin{equation}\label{DU}
D_{23}D_{12}D_{23}^*=D_{12}U^*_{23}D_{13}U_{23}.
\end{equation}

Returning to the proof of our theorem we first note that by Theorem \ref{strongThm} and Remark \ref{strongRem} statements \eqref{main1} -- \eqref{main3} are equivalent. 

We proceed now to show \eqref{main3} $\Leftrightarrow$ \eqref{main4}. Equation \eqref{dualDDD} shows that elements of $\hh{\sN}$ are $\hhthL$-invariant if and only if $\hh{D}$ is a multiplicative unitary. This is equivalent to $D$ being a multiplicative unitary and this, by \eqref{DU}, is equivalent to
\begin{equation}\label{above}
U_{23}^*D_{13}U_{23}=D_{13}.
\end{equation}
Now, as the right leg of $D$ generates $R(\sN)$, \eqref{above} is equivalent to the fact that $R(\sN)$ and $\sL$ commute. Since $\sL$ is $R$-invariant, this is equivalent to $\sN$ and $\sL$ commuting, i.e.~to \eqref{main4}.
\end{proof}

\section{Examples}\label{examplesSect}

In this section we will analyze several examples of locally compact quantum groups with projection and show that in very many cases they are not extensions. 

\subsection{Duals of classical semidirect products}

Before dealing with the specific situation of duals of semidirect products let us briefly analyze the behavior of extensions and of quantum groups with projection under passage to the dual quantum group. 

\begin{itemize}
\item[$\blacktriangleright$] If $\GG$ is an extension of $\GG_2$ by $\GG_1$ with $\alpha\colon\Linf(\GG_2)\to\Linf(\GG)$ and $\beta\colon\Linf(\hh{\GG_1})\to\Linf(\hh{\GG})$ as in Definition \ref{exten} then $\hh{\GG}$ is an extension of $\hh{\GG_1}$ by $\hh{\GG_2}$. The corresponding maps $\hh{\alpha}\colon\Linf(\hh{\GG_1})\to\Linf(\hh{\GG})$ and $\hh{\beta}\colon\Linf(\GG_2)\to\Linf(\GG)$ are $\hh{\alpha}=\beta$ and $\hh{\beta}=\alpha$ (cf.~\cite[Remarks after Definition 3.2]{VaesVainerman} and comments after Definition \ref{exten}).
\item[$\blacktriangleright$] If $\GG$ is a locally compact quantum group with projection onto $\HH$ then, as already explained in Remark \ref{endsect}, $\hh{\GG}$ is a locally compact quantum group with projection onto $\hh{\HH}$.
\end{itemize}

The above two observations indicate that the dual of an extension, built from two locally compact quantum groups, is an extension built from the duals of the original two quantum groups, with their roles reversed. On the other hand if $\GG$ is a locally compact quantum group with projection onto $\HH$ then it may or may not be an extension of $\HH$, but the dual $\hh{\GG}$ should naturally be considered as a candidate of an extension of $\hh{\HH}$. Therefore even if a given locally compact quantum $\GG$ group with projection onto $\HH$ happens to be an extension of $\HH$, one should not expect $\hh{\GG}$ to have a natural structure of an extension of $\hh{\HH}$ arising from the fact that $\hh{\GG}$ is a quantum group with projection onto $\hh{\HH}$.

Let us now consider the particular case when $G$ and $H$ are locally compact groups and $G$ is equipped with a projection onto $H$, i.e.~we have a pair of continuous homomorphisms $\rh\colon{G}\to{H}$ and $\imath\colon{H}\to{G}$ such that $\rh\comp\imath=\id_H$ (cf.~\cite[Sections 1 \& 3]{proj}). One can easily see that in this case $G$ is isomorphic to a semidirect product of $K\rtimes{H}$, where $K$ is the kernel of $\rh$. In particular $G$ is an extension (of $H$ by $K$), i.e.~we have the exact sequence
\[
\xymatrix{\{1\}\ar[r]&K\ar[r]&G\ar[r]^{\rh}&H\ar[r]&\{1\}.}
\]

The dual $\hh{G}$ of $G$ is a locally compact quantum group with projection onto $\hh{H}$ and the next proposition answers the question whether $\hh{G}$ is an extension of $\hh{H}$.

\begin{proposition}\label{classGH}
Let $G$ and $H$ be as above. If $\hh{G}$ is an extension of $\hh{H}$ then $G$ is the direct product $G=K\times{H}$.
\end{proposition}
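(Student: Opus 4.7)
The plan is to apply Theorem \ref{main} to the quantum group $\hh{G}$ with its induced projection onto $\hh{H}$, and then to unwind the resulting algebraic condition in the classical setting. By the equivalence \eqref{main1}$\Leftrightarrow$\eqref{main4} in Theorem \ref{main} (combined with the extension criterion proved just before it), $\hh{G}$ is an extension of $\hh{H}$ if and only if the copy $\sL := \hh{\pi}(\Linf(\hh{H})) \subset \Linf(\hh{G})$ commutes with the fixed point subalgebra $\sN \subset \Linf(\hh{G})$ of the right quantum group homomorphism $\hh{\th}$ supplied by Remark \ref{endsect}.

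In the classical situation, $\Linf(\hh{G})$ and $\Linf(\hh{H})$ are the group von Neumann algebras of $G$ and $H$. Since the projection structure on $G$ comes from $\rh\colon G\to H$ and $\imath\colon H\to G$, the embedding $\hh{\pi}$ is just the canonical inclusion $\Linf(\hh{H}) \hookrightarrow \Linf(\hh{G})$ induced by the subgroup inclusion $\imath$. To identify $\sN$, I would invoke Remark \ref{endsect} applied to $G$ itself: it gives $\sN = \dd{\Linf(H)} = \Linf(H)' \cap \Linf(\hh{G})$, where $\Linf(H) \subset \Linf(G)$ is the image of $\rh^*$, i.e., the subalgebra of left $K$-invariant functions on $G = K\rtimes H$ (with $K = \ker \rh$). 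A direct computation in the left regular representation shows that $\lambda_g \in \Linf(\hh{G})$ commutes with multiplication by all such functions if and only if $g \in K$, and hence $\sN = \Linf(\hh{K})$, sitting inside $\Linf(\hh{G})$ via the subgroup inclusion $K \hookrightarrow G$.

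The rest is purely group-theoretic: the subalgebras $\Linf(\hh{H})$ and $\Linf(\hh{K})$ commute inside $\Linf(\hh{G})$ if and only if every element of $H$ commutes with every element of $K$ in $G$. In $G = K\rtimes H$ this forces the conjugation action of $H$ on $K$ to be trivial, which is precisely the condition that the semidirect product be direct, giving $G = K \times H$ as required.

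The only real obstacle I anticipate is the bookkeeping of dualities: matching the $\sN$ produced by Theorem \ref{main} applied to $\hh{G}$ with the commutant $\Linf(H)' \cap \Linf(\hh{G})$ furnished by Remark \ref{endsect} applied to $G$, and taking care that $\hh{\pi}$ is the inclusion coming from $\imath$ (not from $\rh$). Once those identifications and the explicit formula $\sN = \Linf(\hh{K})$ are in place, the group-theoretic conclusion is automatic.
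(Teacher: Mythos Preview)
Your proposal is correct and follows essentially the same approach as the paper: apply Theorem \ref{main} to $\hh{G}$, identify the fixed point subalgebra as $\Linf(\hh{K})\subset\Linf(\hh{G})$, and deduce that its commutation with $\Linf(\hh{H})$ forces the $H$-action on $K$ to be trivial. The only minor difference is that the paper obtains the identification $\sN=\Linf(\hh{K})$ by invoking the crossed product description $\Linf(\hh{G})=\Linf(\hh{K})\rtimes{H}$ from \cite{proj}, whereas you reach the same conclusion via Remark \ref{endsect} and a direct commutant computation in the regular representation; both are valid and lead to the same endpoint.
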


\begin{proof}
For any classical group $F$ the algebra $\Linf(\hh{F})$ is the group von Neumann algebra of $F$. Also, since $G=K\rtimes{H}$, we have $\Linf(\hh{G})=\Linf(\hh{K})\rtimes{H}$. The results of \cite{proj} show that $\Linf(\hh{H})\subset\Linf(\hh{G})$ coincides with the embedding arising from the crossed product structure of $\Linf(\hh{G})$ and the fixed point subalgebra $\hh{N}\subset\Linf(\hh{G})$ is the image of $\Linf(\hh{K})$ under its respective inclusion into the crossed product. 

If $\hh{G}$ is an extension of $\hh{H}$ then by Theorem \ref{main} $\hh{N}$ commutes with $\Linf(\hh{H})$, i.e.~the action of $H$ on $K$ is trivial. Consequently $G=K\times{H}$.
\end{proof}

Proposition \ref{classGH} shows that any semidirect product group $G$ which is not a direct product provides an example of a quantum group with projection (namely $\hh{G}$) which is not an extension.

\subsection{Quantum $\mathrm{U}_q(2)$}

% In this section we will illustrate the results of Section \ref{ExtSect} by showing that the compact quantum group $\mathrm{U}_q(2)$ (\cite{Koelink,MH-R,wysoczanski,Uq2}) defined for a complex deformation parameter $q$ such that $0<|q|<1$ is an extension of the classical torus $\TT$ in the sense of \cite[Definition 3.2]{VaesVainerman} if and only if $q$ is real.
In this section we will illustrate the results of Section \ref{ExtSect} by showing that the compact quantum group $\mathrm{U}_q(2)$ (\cite{Koelink,MH-R,wysoczanski}) defined for a complex deformation parameter $q$ such that $0<|q|<1$ is an extension of the classical torus $\TT$ in the sense of \cite[Definition 3.2]{VaesVainerman} if and only if $q$ is real.

We begin by recalling the definition of $\mathrm{U}_q(2)$: fix a deformation parameter $q\in\CC\setminus\{0\}$ of absolute value strictly less than $1$ and let $\zeta=q/\overline{q}$. The \cst-algebra $\C\bigl(\mathrm{U}_q(2)\bigr)$ is the universal \cst-algebra generated by elements $\alpha,\gamma$ and $z$ such that $\gamma$ is normal and
\[
\begin{aligned}
\alpha^*\alpha+\gamma^*\gamma&=\I,&\alpha\gamma&=\overline{q}\gamma\alpha,\\
\alpha\alpha^*+|q|^2\gamma^*\gamma&=\I,&z\gamma{z^*}&=\zeta^{-1}\gamma,\\
zz^*=z^*z&=\I,&z\alpha{z^*}&=\alpha.
\end{aligned}
\]
Note that normality of $\gamma$ and the relation $\alpha\gamma=\overline{q}\gamma\alpha$ imply that $\alpha\gamma^*=q\gamma^*\alpha$ (cf.~\cite[Section 3.1]{SO(3)}). The comultiplication $\Delta_{\mathrm{U}_q(2)}\colon\C\bigl(\mathrm{U}_q(2)\bigr)\to\C\bigl(\mathrm{U}_q(2)\bigr)\tens\C\bigl(\mathrm{U}_q(2)\bigr)$ is uniquely determined by
\[
\begin{aligned}
\begin{aligned}
\Delta_{\mathrm{U}_q(2)}(\alpha)&=\alpha\tens\alpha-q\gamma^*z\tens\gamma,\\
\Delta_{\mathrm{U}_q(2)}(\gamma)&=\gamma\tens\alpha+\alpha^*z\tens\gamma.
\end{aligned}
&&\quad\Delta_{\mathrm{U}_q(2)}(z)&=z\tens{z},
\end{aligned}
\]
With the above comultiplication, $\mathrm{U}_q(2)$ is a compact quantum group which, moreover, is co-amenable (\cite{bmt}). In particular $\mathrm{U}_q(2)$ has a faithful Haar state (\cite[Section 2.5.3]{FrSkTo}) and it fits into the framework of locally compact quantum groups.

As described in \cite{kmrw}, the quantum group $\mathrm{U}_q(2)$ is a locally compact quantum group with projection: consider the mapping $\varLambda\colon\C\bigl(\mathrm{U}_q(2)\bigr)\to\C\bigl(\mathrm{U}_q(2)\bigr)$ given by
\[
\varLambda(\alpha)=\I,\quad\varLambda(\gamma)=0\quad\text{and}\quad\varLambda(z)=z.
\]
Then $\varLambda$ is an idempotent unital $*$-homomorphism commuting with the comultiplication. Using $\varLambda$ one can define on $\mathrm{U}_q(2)$ the structure of a locally compact quantum group with projection onto the classical torus $\TT$: the map $\pi\colon\Linf(\TT)\to\Linf\bigl(\mathrm{U}_q(2)\bigr)$ is
\[
\Linf(\TT)\ni{f}\longmapsto{f}(z)\in\Linf\bigl(\mathrm{U}_q(2)\bigr)
\]
and $\th\colon\Linf\bigl(\mathrm{U}_q(2)\bigr)\to\Linf\bigl(\mathrm{U}_q(2)\bigr)\vtens\Linf(\TT)$ is the von Neumann algebra extension of $(\id\tens\varLambda)\comp\Delta_{\mathrm{U}_q(2)}$.

The ``fixed point subalgebra'' $\sN$ inside $\Linf\bigl(\mathrm{U}_q(2)\bigr)$ is the von Neumann algebra completion of the \cst-subalgebra generated by $\alpha$ and $\gamma$ inside $\C\bigl(\mathrm{U}_q(2)\bigr)$ which is isomorphic to the algebra of continuous functions on the braided quantum group $\mathrm{SU}_q(2)$ as defined in \cite{kmrw}.

\begin{theorem}
The following are equivalent
\begin{enumerate}
\item The quantum group $\mathrm{U}_q(2)$ is an extension of $\TT$,
\item the deformation parameter $q$ is real,
\item $\mathrm{SU}_q(2)$ is a compact quantum group.
\end{enumerate}
\end{theorem}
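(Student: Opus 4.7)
The plan is to derive (1)~$\Leftrightarrow$~(2) directly from Theorem~\ref{main} by a one-line computation with the defining relations, and to obtain (2)~$\Leftrightarrow$~(3) by inspecting the braiding of the braided quantum group $\mathrm{SU}_q(2)$ constructed in~\cite{kmrw}.

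First, I would apply Theorem~\ref{main} with $\GG=\mathrm{U}_q(2)$ and $\HH=\TT$: the coideal $\sL$ is generated as a von Neumann algebra by $z$, and the fixed point subalgebra $\sN$ is generated by $\alpha$ and $\gamma$, as recalled just above the statement of the theorem. The equivalence \eqref{main1}~$\Leftrightarrow$~\eqref{main4} of Theorem~\ref{main} then says that $\mathrm{U}_q(2)$ is an extension of $\TT$ if and only if $\sL$ commutes with $\sN$.

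Second, I would verify this commutation directly from the presentation of $\C\bigl(\mathrm{U}_q(2)\bigr)$. The relation $z\alpha{z^*}=\alpha$ makes $z$ commute with $\alpha$ (and hence $\alpha^*$) unconditionally, so the only remaining condition is $z\gamma=\gamma z$. The defining relation $z\gamma{z^*}=\zeta^{-1}\gamma$ with $\zeta=q/\overline{q}$ makes this equivalent to $\zeta=1$, i.e.~$q=\overline{q}$, i.e.~$q\in\RR$. Since $\sL$ is generated by $z$ and $\sN$ by $\alpha,\gamma$, it follows that $[\sL,\sN]=0$ if and only if $q\in\RR$, and combined with the preceding paragraph this yields (1)~$\Leftrightarrow$~(2).

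For (2)~$\Leftrightarrow$~(3), I would appeal to the construction of $\mathrm{SU}_q(2)$ in~\cite{kmrw}: its algebra of continuous functions is $\sN$, equipped with a braided coproduct landing in $\sN\boxtimes\sN$ (Section~\ref{projSect}), where the braiding is the conjugation $\TT$-action on $\sN$ induced by $z$. By the same calculation as in the previous step this action is trivial precisely when $\zeta=1$, i.e.~$q\in\RR$; and a braided quantum group reduces to an ordinary one exactly when its braiding is trivial, in which case $\sN\boxtimes\sN=\sN\vtens\sN$ and the braided coproduct becomes an ordinary coproduct, identifying $\mathrm{SU}_q(2)$ with the classical Woronowicz compact quantum group. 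For the converse, if $\zeta\neq 1$ then the formula $\Delta_{\mathrm{U}_q(2)}(\gamma)=\gamma\tens\alpha+\alpha^*z\tens\gamma$ shows that $\Delta_{\mathrm{U}_q(2)}(\sN)$ genuinely leaves $\sN\vtens\sN$ because $\alpha^*z\notin\sN$, so $\sN$ cannot carry a compact quantum group structure inherited from $\Delta_{\mathrm{U}_q(2)}$.

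The main obstacle I foresee is the converse direction just above: ruling out that $\mathrm{SU}_q(2)$ might carry some compact quantum group structure not directly visible from the restriction of $\Delta_{\mathrm{U}_q(2)}$. The safest route is to cycle back through Theorem~\ref{main}: any such structure on $\sN$ would realise $\sL\subset\Linf(\mathrm{U}_q(2))$ as strongly normal (by the characterisation at the end of Section~\ref{normalSect} applied in the dual picture together with condition \eqref{main2}), hence force $\mathrm{U}_q(2)$ to be an extension of $\TT$, and thus $q\in\RR$ via the already-established (1)~$\Leftrightarrow$~(2).
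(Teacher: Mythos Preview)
Your argument for (1)~$\Leftrightarrow$~(2) is correct and is exactly what the paper does: it invokes Theorem~\ref{main} to reduce to commutation of $\sN$ with $\sL$ and then reads off from the relation $z\gamma z^*=\zeta^{-1}\gamma$ that this holds iff $\zeta=1$, i.e.\ $q\in\RR$. The paper is simply terser about the relation check.

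For (2)~$\Leftrightarrow$~(3) the paper does not argue at all: it cites \cite{kmrw}, where it is shown that the braided quantum group $\mathrm{SU}_q(2)$ is an ordinary compact quantum group precisely when $q\in\RR$. Your attempt to replace this citation by a direct argument is where the trouble lies. The forward direction (2)~$\Rightarrow$~(3) is fine once one interprets (3) as ``the braided coproduct on $\sN$ lands in $\sN\vtens\sN$''. Your worry about ``some other'' compact quantum group structure on $\sN$ is unnecessary under that reading, and in any case the proposed cure has a real gap. The characterisation at the end of Section~\ref{normalSect}, read in the dual picture, says that a quantum group structure on $\sN=\dd{\Linf(\hh{\HH})}$ (with comultiplication inherited from $\Linf(\GG)$) is equivalent to \emph{$\Linf(\hh{\HH})\subset\Linf(\hh{\GG})$} being strongly normal, not to $\sL=\Linf(\HH)\subset\Linf(\GG)$ being strongly normal. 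In other words, your cycling argument would only yield that $\hh{\GG}$ is an extension of $\hh{\HH}$, and Section~\ref{examplesSect} (duals of classical semidirect products) shows explicitly that this is in general a different statement from $\GG$ being an extension of $\HH$. So you cannot close the loop back to (1) this way.

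The fix is simply to do what the paper does: for (2)~$\Leftrightarrow$~(3) cite \cite{kmrw}. Alternatively, if you insist on an internal argument, stick with your direct computation that $\Delta_{\mathrm{U}_q(2)}(\gamma)\notin\sN\vtens\sN$ when $\zeta\neq1$ (after checking $\alpha^*z\notin\sN$, which follows from $\th(\alpha^*z)=\alpha^*z\tens z$), and drop the cycling paragraph entirely.
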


\begin{proof}
By theorem \ref{main} $\mathrm{U}_q(2)$ is an extension of $\TT$ if and only if $\Linf\bigl(\mathrm{SU}_2(2)\bigr)$ and $\Linf(\TT)$ commute inside $\Linf\bigl(\mathrm{U}_2(2)\bigr)$. This implies that $\zeta=1$, i.e.~$q\in\RR$. This is equivalent to $\mathrm{SU}_q(2)$ being a compact quantum group by results of \cite{kmrw}.
\end{proof}

\subsection{Quantum ``$az+b$'' groups}

The quantum ``$az+b$'' groups were introduced in \cite{azb,nazb}. The construction begins with choosing a deformation parameter $q$ which is a non-zero complex number from a certain set of \emph{admissible} values. These include the interval $]0,1]$, even roots of unity as well as other numbers of absolute value strictly less than $1$ (\cite{nazb}). With each choice of $q$ a (locally compact) multiplicative subgroup $\Gamma_q$ of $\CC\setminus\{0\}$ is fixed.

The \cst-algebra $\C_0(\GG)$ of continuous functions vanishing at infinity on the quantum ``$az+b$'' group $\GG$ is defined to be the crossed product $\C_0\bigl(\Gamma_q\cup\{0\}\bigr)\rtimes\Gamma_q$, with the action of $\Gamma_q$ on $\Gamma_q\cup\{0\}$ given by multiplication of complex numbers. An alternative description of $\C_0(\GG)$ is that it is the universal \cst-algebra generated by two elements $a$ and $b$ affiliated with it (\cite{unbo,gen}) such that
\begin{itemize}
\item[$\blacktriangleright$] $a$ and $b$ are normal,
\item[$\blacktriangleright$] the spectra of $a$ and $b$ are contained in $\Gamma_q\cup\{0\}$,
\item[$\blacktriangleright$] $a$ is invertible (and $a^{-1}$ is affiliated with the \cst-algebra),
\item[$\blacktriangleright$] $ab=q^2ba$ and $a^*b=ba^*$.
\end{itemize}
The comultiplication $\Delta_\GG\in\Mor\bigl(\C_0(\GG),\C_0(\GG)\tens\C_0(\GG)\bigr)$ is defined on generators by
\[
\Delta_\GG(a)=a\tens{a},\quad\text{and}\quad\Delta_\GG(b)=a\tens{b}\,\dot{+}\,b\tens\I,
\]
where $\dot{+}$ denotes the closure of the sum of (unbounded) elements affiliated with $\C_0(\GG)\tens\C_0(\GG)$. The Haar measures on $\GG$ was described in \cite{VDazb,haar} and $\GG$ was found to be a locally compact quantum group (we refer to \cite{unbo} for the definition of a \emph{morphism} of \cst-algebras).

This description of $\GG$ shows clearly that $\GG$ is a classical group if and only if $q=1$. In this case $\Gamma_q=\CC\setminus\{0\}$ and $\GG$ is the classical ``$az+b$'' group. 

Each quantum ``$az+b$ group is a quantum group with projection onto the classical group $\Gamma_q$ (\cite[Example 3.7]{proj}). The associated braided quantum group is given by the commutative \cst-algebra $\C_0\bigl(\Gamma_q\cup\{0\}\bigr)$. Clearly this describes a quantum group if and only if $q=1$ and in this case it is the group $\CC$. 

The question whether $\GG$ is an extension of $\Gamma_q$ is answered in the following theorem:

\begin{theorem}
Let $\GG$ be the quantum ``$az+b$'' group with deformation parameter $q$. Then the following are equivalent:
\begin{enumerate}
\item $\GG$ is an extension of $\Gamma_q$,
\item $q=1$ (and so $\GG$ is a classical group),
\item the associated braided quantum group is a classical locally compact group.
\end{enumerate}
\end{theorem}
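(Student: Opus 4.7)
The plan is to invoke Theorem \ref{main}, which reduces the problem to checking whether $\sL=\pi\bigl(\Linf(\Gamma_q)\bigr)$ commutes with the fixed point subalgebra $\sN\subset\Linf(\GG)$. In the present setting these two algebras are transparent: $\sL$ is the ultraweakly closed subalgebra generated by bounded Borel functions of the normal invertible generator $a$, since $\pi$ embeds $\Linf(\Gamma_q)$ via the coordinate $a$; while $\sN$ is identified in the paragraph before the theorem as the weak completion of the commutative algebra $\C_0(\Gamma_q\cup\{0\})$ generated by $b$.

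I would then use the defining commutation relations $ab=q^2ba$ and $a^*b=ba^*$. The second of these, together with its adjoint, implies that $a$ commutes with $b^*$, hence with $b^*b$, and therefore with every bounded Borel function of $|b|$. A standard functional-calculus argument then reduces the question of whether $\sL$ and $\sN$ commute to the single question of whether $a$ commutes with $b$ itself. Combined with the first relation $ab=q^2ba$ this forces $q^2=1$, and within the admissible parameter set of \cite{azb,nazb} the only such value compatible with the construction under consideration is $q=1$. This gives \textnormal{(1)}$\Leftrightarrow$\textnormal{(2)}. The equivalence \textnormal{(2)}$\Leftrightarrow$\textnormal{(3)} is already recorded in the paragraph preceding the statement: although $\C_0(\Gamma_q\cup\{0\})$ is always commutative as a \cst-algebra, the accompanying braided quantum group structure reduces to that of a classical locally compact group, namely $(\CC,+)$, precisely when $q=1$.

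The main obstacle is the functional-calculus step lifting commutation at the level of von Neumann algebras to commutation of the unbounded generators $a$ and $b$ themselves. This is standard given normality of both operators and invertibility of $a$: one passes to spectral resolutions and observes that the part coming from $a^*b=ba^*$ handles commutation with $|b|$, so the remaining obstruction is the phase, whose commutation with $a$ is controlled exactly by the identity $ab=ba$. Once this is in place the argument reduces to the purely algebraic assertion that $ab=q^2ba$ and $ab=ba$ together force $q^2=1$, and the remainder is a book-keeping check against the admissible parameter set.
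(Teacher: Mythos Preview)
Your overall strategy is the right one, and it is exactly the route the paper (implicitly) takes: invoke Theorem \ref{main} to reduce the question to whether $\sL$ and $\sN$ commute, identify $\sL$ as the von Neumann algebra generated by $a$ and $\sN$ as that generated by $b$, and then read the answer off the defining relations. The equivalence \textnormal{(2)}$\Leftrightarrow$\textnormal{(3)} is indeed recorded in the paragraph before the statement.

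There is, however, a concrete slip in your functional-calculus reduction. From $a^*b=ba^*$ you correctly deduce (by taking adjoints) that $a$ commutes with $b^*$. But the inference ``hence with $b^*b$'' is false: commuting with $b^*$ alone does not give commutation with $b^*b$. In fact the relations yield
\[
a(b^*b)=(ab^*)b=(b^*a)b=b^*(ab)=q^2(b^*b)a,
\]
so $a$ commutes with $|b|^2$ only when $q^2=1$, not always. Thus your proposed decomposition into ``modulus part (always commutes)'' and ``phase part (carries the obstruction)'' does not hold.

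The fix is to drop that decomposition entirely; it is unnecessary. Since $\sL$ and $\sN$ are the von Neumann algebras generated by the normal affiliated elements $a$ and $b$ respectively, commutation of $\sL$ with $\sN$ is equivalent to commutation of the spectral projections, hence to $a$ commuting with both $b$ and $b^*$ (and symmetrically for $a^*$). The relations $a^*b=ba^*$ and its adjoint take care of the ``mixed'' pairs unconditionally, while $ab=q^{2}ba$ forces $q^{2}=1$ for the remaining pair. Your final bookkeeping against the admissible parameter set then gives $q=1$, as you say. So the argument survives, but the intermediate step about $|b|$ should be removed and replaced by the direct computation above.
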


\end{document}